\documentclass[english,12pt]{smfart}
\usepackage{amsfonts}
\usepackage{amscd}
\usepackage{graphicx}
\usepackage[all]{xypic}
\usepackage[all]{xy}
\usepackage[francais]{babel}
\usepackage[latin1]{inputenc}

\textwidth=15cm 
\oddsidemargin=5mm 
\evensidemargin=5mm
\textheight=21.5cm
\parindent=0.5cm

\CompileMatrices

\swapnumbers

\def\Var{{\rm Var}}

\let\cal\mathcal

\def\11{{\mathbf 1}}

\mathchardef\alphag="7C0B \mathchardef\betag="7C0C
\mathchardef\gammag="7C0D \mathchardef\deltag="7C0E
\mathchardef\varepsilong="7C22 \mathchardef\varphig="7C27
\mathchardef\psig="7C20 \mathchardef\zetag="7C10
\mathchardef\epsilong="7C0F \mathchardef\rhog="7C1A
\mathchardef\taug="7C1C \mathchardef\upsilong="7C1D
\mathchardef\iotag="7C13 \mathchardef\thetag="7C12
\mathchardef\pig="7C19 \mathchardef\sigmag="7C1B
\mathchardef\etag="7C11 \mathchardef\omegag="7C21
\mathchardef\kappag="7C14 \mathchardef\lambdag="7C15
\mathchardef\mug="7C16 \mathchardef\xig="7C18
\mathchardef\chig="7C1F \mathchardef\nug="7C17
\mathchardef\varthetag="7C23 \mathchardef\varpig="7C24
\mathchardef\varrhog="7C25 \mathchardef\varsigmag="7C26
\mathchardef\Omegag="7C0A \mathchardef\Thetag="7C02
\mathchardef\Sigmag="7C06 \mathchardef\Deltag="7C01
\mathchardef\Phig="7C08 \mathchardef\Gammag="7C00
\mathchardef\Psig="7C09 \mathchardef\Lambdag="7C03
\mathchardef\Xig="7C04 \mathchardef\Pig="7C05
\mathchardef\Upsilong="7C07

\newtheorem{ex}[subsubsection]{Example}
\newtheorem{thm}[subsubsection]{Theorem}
\newtheorem{theorem}[subsection]{Theorem}
\newtheorem{lem}[subsubsection]{Lemma}
\newtheorem{lemma}[subsubsection]{Lemma}

\newtheorem{prop}[subsubsection]{Proposition}

\theoremstyle{definition}
\newtheorem{definition}[subsubsection]{Definition}
\newtheorem{example}[subsection]{Example}

\newtheorem{def-prop}[subsection]{Proposition-Definition}
\newtheorem{def-theorem}[subsection]{Theorem-Definition}
\newtheorem{def-lem}[subsection]{Lemma-Definition}

\theoremstyle{remark}
\newtheorem{remark}[subsubsection]{Remark}

\newtheorem{r and comp}[subsection]{Remarks and computations}
\newtheorem{notation}[subsubsection]{Notation}

\theoremstyle{plain}

\numberwithin{equation}{subsection}

\def\boxit#1#2{\setbox1=\hbox{\kern#1{#2}\kern#1}%
\dimen1=\ht1 \advance\dimen1 by #1 \dimen2=\dp1 \advance\dimen2 by
#1
\setbox1=\hbox{\vrule height\dimen1 depth\dimen2\box1\vrule}%
\setbox1=\vbox{\hrule\box1\hrule}%
\advance\dimen1 by .4pt \ht1=\dimen1 \advance\dimen2 by .4pt
\dp1=\dimen2 \box1\relax}

\let\cal\mathcal

\mathchardef\alphag="7C0B \mathchardef\betag="7C0C
\mathchardef\gammag="7C0D \mathchardef\deltag="7C0E
\mathchardef\varepsilong="7C22 \mathchardef\varphig="7C27
\mathchardef\psig="7C20 \mathchardef\zetag="7C10
\mathchardef\epsilong="7C0F \mathchardef\rhog="7C1A
\mathchardef\taug="7C1C \mathchardef\upsilong="7C1D
\mathchardef\iotag="7C13 \mathchardef\thetag="7C12
\mathchardef\pig="7C19 \mathchardef\sigmag="7C1B
\mathchardef\etag="7C11 \mathchardef\omegag="7C21
\mathchardef\kappag="7C14 \mathchardef\lambdag="7C15
\mathchardef\mug="7C16 \mathchardef\xig="7C18
\mathchardef\chig="7C1F \mathchardef\nug="7C17
\mathchardef\varthetag="7C23 \mathchardef\varpig="7C24
\mathchardef\varrhog="7C25 \mathchardef\varsigmag="7C26
\mathchardef\Omegag="7C0A \mathchardef\Thetag="7C02
\mathchardef\Sigmag="7C06 \mathchardef\Deltag="7C01
\mathchardef\Phig="7C08 \mathchardef\Gammag="7C00
\mathchardef\Psig="7C09 \mathchardef\Lambdag="7C03
\mathchardef\Xig="7C04 \mathchardef\Pig="7C05
\mathchardef\Upsilong="7C07

\def\R{{ \mathbb R}}
\def\Z{{ \mathbb Z}}
\def\L{{ \mathbb L}}
\def\C{{ \mathbb C}}
\def\P{{ \mathbb P}}
\def\N{{ \mathbb N}}

\newcommand{\eps}{\varepsilon}

\newcommand{\A}{\mathbb A}

\DeclareMathOperator{\jac}{jac}

\DeclareMathOperator{\grad}{grad}



\date{\today}
\begin{document}

\title[\tiny{Grothendieck ring of semialgebraic formulas  }]{\rm Grothendieck ring of semialgebraic formulas and motivic real Milnor fibres}{}
\author{Georges COMTE}
\address{Laboratoire de Math\'ematiques de l'Universit\'e de Savoie, UMR CNRS 5127,
B\^atiment Chablais, Campus scientifique, 
73376 Le Bourget-du-Lac cedex, France}
\email{georges.comte@univ-savoie.fr}
\urladdr{http://gc83.perso.sfr.fr/}

\author{Goulwen FICHOU}
\address{IRMAR, UMR 6625 du CNRS, Campus de Beaulieu, 35042 Rennes cedex, France}
\email{goulwen.fichou@univ-rennes1.fr}
\urladdr{http://perso.univ-rennes1.fr/goulwen.fichou/}

\begin{abstract}
 We define a Grothendieck ring for basic
real semialgebraic formulas, 
that is for systems of real algebraic equations and inequalities.
In this ring the class of a formula takes into consideration 
the algebraic nature of the set of points satisfying this formula and this ring 
contains as a subring the usual Grothendieck ring of real algebraic formulas.
We give a realization of our ring that allows us to express a class as a 
$\Z[\frac{1}{2}]$-linear combination of 
classes of real algebraic formulas, so this realization gives rise to a notion of 
virtual Poincar\'e polynomial for basic semialgebraic
formulas. We then define zeta functions with coefficients in our ring, built on 
semialgebraic 
formulas in arc spaces. We show that they are rational 
and relate them to the topology of real Milnor fibres.  
\end{abstract}

\maketitle

\renewcommand{\partname}{}

\section*{Introduction}

Let us consider the category $SA(\R)$ of real semialgebraic sets, 
the morphisms being the semialgebraic maps. We denote by 
$(K_0(SA(\R)),+,\cdot)$, or simply $K_0(SA(\R))$,
the Grothendieck ring of $SA(\R)$, that is to say the free ring  
generated by all semialgebraic sets A, denoted by $[A]$ as viewed as
element of 
$K_0(SA(\R))$, in such a way that for all objects $A,B$ of $SA(\R)$ 
one has:
$[A\times B]=[A]\cdot[B]$ and for all closed semialgebraic set $F$ in $A$
one has: $[A\setminus F]+[F]=[A]$ (this implies that for every
semialgebraic sets $A,B$, one has: $[A\cup B]=[A]+[B]-[A\cap B]$).

When  furthermore an equivalence relation for semialgebraic
sets is previously considered for the definition of $K_0(SA(\R))$, 
one has to be aware that the induced quotient ring, 
still denoted for simplicity by  $K_0(SA(\R))$, may dramatically collapse.
For instance let us consider the equivalence relation $A\sim B$ if and only if
there exists a semialgebraic bijection from $A$ to $B$.
In this case we simply say that $A$ and $B$ are isomorphic.
 Then for the definition of $K_0(SA(\R))$, starting from 
classes of isomorphic sets instead of simply sets, one obtains a quite
trivial Grothendieck ring, namely $K_0(SA(\R))=\Z$. 
Indeed,  denoting
$[\R]$ by $\L$ and $[\{*\}]$ by $\P$, from the fact that 
$\{*\}\times \{*\}\sim \{*\}$, one gets 
$$ \P^k=\P, \ \forall k\in \N^*,$$
and from the fact that $\R=
]-\infty, 0[ \cup \{0\} \cup ]0,+\infty[$ and that intervals
of the same type are isomorphic, one gets 
$$ \L=-\P. $$ 

On the other hand, by the semialgebraic cell decomposition theorem,
we  obtain that a real semialgebraic set is a finite union of disjoint
open cells, each of which is isomorphic to $\R^k$, with $k\in \N$
(with the convention that $\R^0=\{*\}$). It follows that 
$K_0(SA(\R))=<\P>$, the ring generated by $\P$. 
At this point, the ring $<\P>$ could be trivial.
But one knows that the Euler-Poincar\'e characteristic with compact 
supports $\chi_c:SA(\R)\to \Z$  is surjective. Let us recall that the Euler-Poincar\'e characteristic with compact 
supports is a topological invariant defined on locally compact semialgebraic sets and uniquely extended  to an additive 
invariant on all semialgebraic sets (see for instance \cite{Coste}, Theorem 1.22). Since $\chi_c$ is additive, 
multiplicative and invariant under isomorphims, it factors through
$K_0(SA(\R))$, giving a surjective morphism of rings, and finally
an isomorphism of rings, still denoted for simplicity by 
$\chi_c$ (cf also \cite{Q}):
\vskip0mm
 $$ \shorthandoff{;:!?}
\xymatrix{   
SA(\R)\ar[d]  \ar[r]^{\chi_c}&  \Z       \\
  <\P> =K_0(SA(\R)) \ar[ru]_{\chi_c}    &     \\}  $$

The characteristic $\chi_c(A)$ of a semialgebraic set $A$ is in fact
defined in the same way, so we obtain the equality 
$K_0(SA(\R))=<\P>$,
that is from a specific cell decomposition of $A$, where $<\P>$ is 
replaced by $\chi_c(\{*\})=1$. The difficulty in the definition of $\chi_c$ is 
then to show that $\chi_c$ is 
independent of the choice of the  cell decomposition of $A$ (it technically 
consists in showing that the definition of  $\chi_c(A)$ does not depend on the 
isomorphism class of $A$, see \cite{Dri} for instance).

When one starts from the category of real algebraic varieties $\Var_\R$ or from
the category of
real algebraic sets $\R \Var$, 
as we do not have algebraic cell decompositions, we could expect 
that the induced Grothendieck ring $K_0(\Var_\R)$ 
is no longer trivial. This is indeed the case, since for instance
the virtual Poincar\'e polynomial morphism  
factors through $K_0(\Var_\R)$ and has image $\Z[u]$ (see \cite{MCP}).
 
 The first part of this article is devoted  to the construction of  
non-trivial 
Gro\-then\-dieck ring $K_0(BSA_{\R})$ associated to $SA(\R)$, with a  
canonical inclusion
$$K_0(\Var_\R)  \hookrightarrow K_0(BSA_{\R}),$$  
that gives rise to a notion of virtual 
Poincar\'e polynomial
for basic real semialgebraic formulas extending the virtual Poincar\'e polynomial
of real algebraic sets
and that allows factorization of the Euler-Poincar\'e
characteristic of real semialgebraic sets of points satisfying the formulas. 

To be more precise, we first construct  
$K_0(BSA_{\R}))$, the Grothendieck ring of basic real semialgebraic 
formulas (which are quantifier free real semialgebraic formulas
or simply systems of real algebraic equations and inequalities) where the class
of basic formulas without inequality is considered up to algebraic isomorphism
of the underlying real algebraic varieties. 
In general a class in $K_0(BSA_{\R})$ 
of a basic real semialgebraic 
formula  depends strongly on the formula itself rather than 
 only on the geometry of the real
semialgebraic set of points satisfying this formula. This construction is achieved in 
Section $2$.     

In order to make some computations more convenient we present a realization, denoted $\chi$,
 of the ring 
$K_0(BSA_{\R})$ in the somewhat more simple ring $K_0(\Var_\R)\otimes \Z[\frac{1}{2}]$, 
that is a
morphism of rings  
$\chi : K_0(BSA_{\R}) \to K_0(\Var_\R)\otimes \Z[\frac{1}{2}],$
that restricts to the identity map on $K_0(\Var_\R)\hookrightarrow K_0(BSA_{\R})$.
The morphism $\chi$ provides an explicit computation (see Proposition \ref{prop-alg}) 
presenting a class of
$K_0(BSA_{\R})$ as a $\Z[\frac{1}{2}]$-linear combination of classes of $K_0(\Var_\R)$.
When one wants to further simplify the computation of a class of a basic real semialgebraic
formula, 
one can shrink the original ring  $K_0(BSA_{\R})$ 
a little bit more  from 
$K_0(\Var_\R)\otimes \Z[\frac{1}{2}]$ to $K_0(\R\Var)\otimes \Z[\frac{1}{2}]$, where
for instance algebraic formulas with empty set of real points have trivial class.
However as noted in point \ref{nontrivial} of Remark \ref{rmks} 
the class of a basic real semialgebraic formula
with empty set of real points may be not trivial in 
$K_0(\R\Var)\otimes \Z[\frac{1}{2}]$. 
The ring $K_0(BSA_{\R})$ is not defined with a prior notion of isomorphism relation 
contrary to the ring $K_0(\Var_\R)$ where algebraic isomorphism classes of varieties
are generators. Nevertheless we indicate a notion of isomorphism for basic semialgebraic 
formulas that factors through $K_0(BSA_{\R})$ (see Proposition \ref{iso}). 
This is done in Section $2$. 

The realization $ \chi : K_0(BSA_{\R}) \to K_0(\Var_\R))\otimes \Z[\frac{1}{2}]$ naturally
allows us to define in Section $4$ a notion of virtual Poincar\'e polynomial for basic real semialgebraic 
formulas: for a class $[F]$ in $K_0(BSA_{\R})$ that is written as a
$\Z[\frac{1}{2}]$-linear combination $ \sum_{i=1}^q a_i[A_i]$ 
of classes $[A_i] \in K_0(\Var_\R)$ of real algebraic varieties $A_i$, 
we simply define 
the virtual Poincar\'e polynomial of $F$ as the corresponding 
$\Z[\frac{1}{2}]$-linear combination $ \sum_{i=1}^q a_i\beta(A_i)$ of virtual Poincar\'e 
polynomials $\beta(A_i)$ of the varieties $A_i$. The virtual Poincar\'e polynomial of 
$F$ is thus a polynomial $\beta(F)$ in $\Z[\frac{1}{2}][u]$. 
It is then shown that the evaluation at $-1$ of $\beta(F)$ is the Euler-Poincar\'e   
 characteristic of the real semialgebraic set of points satisfying the basic formula $F$
(Proposition \ref{eval}). 

These constructions are summed up in the following commutative diagram
\vskip-3mm
 $$ \shorthandoff{;:!?}
\xymatrix{   
Var_\R \ar[d] \ar[rd] \ar@{^{(}->}[rrr]& & &BSA_{\R}  \ar[ddd]^{\chi_c}   \ar[lld] \\
 K_0(\Var_\R) \ar[dd]_\beta \ar@{^{(}->}[r] \hskip2mm\ar@{^{(}->}[rd]   
& K_0(BSA_{\R}) \ar[d]^\chi \ar[rd]^\chi& &\\
& K_0(\Var_\R) \otimes \Z[\frac{1}{2}] \ar[d]^\beta \ar[r] & K_0(\R\Var) \otimes \Z[\frac{1}{2}] 
\ar[ld]^\beta& \\
\Z[u] \ar@{^{(}->}[r] & \Z[\frac{1}{2}] [u] \ar[rr]^{u=-1}& &\Z \\} 
  $$
\vskip0mm
The second and last part of this article concerns the real Milnor fibres of a given 
polynomial function $f\in \R[x_1,\cdots, x_d]$.
As geometrical objects, we consider real semialgebraic Milnor fibres of the following 
types: $f^{-1}(\pm c)\cap \bar B(0,\alpha) $,
$f^{-1}(]0,\pm c[)\cap \bar B(0,\alpha) $,
$f^{-1}(]0,\pm\infty[)\cap S(0,\alpha) $, for 
$0<\vert c \vert \ll\alpha\ll 1$, $ \bar B(0,\alpha)$ the closed ball of 
$\R^d$ of centre $0$ and radius $\alpha$ and $ S(0,\alpha)$ the sphere of centre $0$ 
and radius $\alpha$. The topological types of these fibres are easily comparable, and
in order to present a motivic version of these real semialgebraic Milnor fibres we define appropriate
zeta functions
with coefficients in $(K_0(\Var_\R)\otimes \Z[\frac{1}{2}])[\L^{-1}]$ 
(the localization of the ring $K_0(\Var_\R)\otimes \Z[{1\over 2}]$ with respect to 
the multiplicative set generated by $\L$). As 
in the complex context (see \cite{DL1}, \cite{DL2}), we prove that these zeta functions
are rational functions expressed in terms of an embedded resolution of $f$ (see Theorem 
\ref{Zeta function}). 
For a complex hypersurface $f$, the rationality of the corresponding zeta function 
allows 
the
definition of the motivic Milnor fibre $S_f$, defined as the negative of the limit at 
infinity
of the rational expression of the zeta function. In the real semialgebraic case, the same 
definition makes sense 
but we obtain a class $S_f$ in $K_0(\Var_\R))\otimes \Z[\frac{1}{2}]$ having a realization under
the Euler-Poincar\'e characteristic of greater 
combinatorial complexity in terms of the data of the resolution of $f$ than in the 
complex case. Indeed, all the strata
of the natural stratification of the exceptional divisor of the resolution of $f$ appear in 
the expression of $\chi_c(S_f)$ in the real case. 
Nevertheless we show that the motivic real semialgebraic Milnor fibres have 
for value under the Euler-Poincar\'e characteristic morphism the Euler-Poincar\'e 
characteristic 
of the  corresponding set-theoretic real semialgebraic Milnor fibres 
(Theorem \ref{Milnor}).

In what follows we sometimes simply say {\sl measure} for
the class of an object in a given Grothendieck ring. The term {\sl inequation} refers to the symbol $\not=$,
 and the term {\sl inequality} refers to the symbol $>$.

\tableofcontents
 
\section{The Grothendieck ring of basic semialgebraic formulas.}

\subsection{Affine real algebraic varieties.}

By an affine algebraic variety over $\R$ we mean an affine reduced and
separated scheme of finite type over $\R$. The category of
affine algebraic varieties over $\R$ is denoted by $\Var_\R$.
An affine real algebraic variety $X$ is then defined by a subset of $\A^n$ together
with a finite number of polynomial equations. Namely, there exist $P_i
\in \R[X_1,\ldots,X_n]$, for $i=1,\ldots,r$, such that the real points $X(\R)$
of $X$ are given by
$$X(\R)=\{x\in \A^n | P_i(x)=0,~i=1,\ldots,r\}.$$
A Zariski-constructible subvariety $Z$ of $\A^n$ is similarly defined by real
polynomial equations and inequations. Namely there exist $P_i, Q_j
\in \R[X_1,\ldots,X_n]$, for $i=1,\ldots,p$ and $j=1,\ldots,q$, such
that the real points $Z(\R)$ of $Z$ are given by
$$Z(\R)=\{x\in \A^n | P_i(x)=0, Q_j(x) \neq 0,~i=1,\ldots,p,~j=1,\ldots,q
\}.$$

As an abelian group, the Grothendieck ring $K_0(\Var_{\R})$ of affine real algebraic 
varieties is formally generated by isomorphism classes $[X]$ of
Zariski-constructible real algebraic varieties, 
 subject to the additivity relation
$$[X]=[Y]+[X\setminus Y],$$
in case $Y\subset X$ is a closed subvariety of $X$. Here $X\setminus Y$
is the Zariski-constructible variety defined by combining
the equations and inequations that define $X$ together with the
equations and inequations obtained by 
reversing the equations and inequations that define $Y$.
The product of
constructible sets induces a ring structure on  $K_0(\Var_{\R})$. We
denote by $\L$ the class in $K_0(\Var_{\R})$ of $\A^1$.

\subsection{Real algebraic sets.}
The real points $X(\R)$ of an affine algebraic variety $X$ over $\R$ form a real
algebraic set (in the sense of \cite{BCR}). The Grothendieck
ring $K_0(\R\Var)$ of affine real algebraic sets
\cite{MCP} is defined in a similar way than that of real algebraic
varieties over $\R$. Taking the real points of an affine real
algebraic variety over $\R$ gives a ring morphism from $K_0(\Var_{\R})$
to $K_0(\R\Var)$. A great advantage of $K_0(\R\Var)$  from a geometrical point of view is that the additivity property implies that the measure of
an algebraic set without real point is zero in $K_0(\R\Var)$.

We already know some realizations of $K_0(\R\Var)$ in simpler rings,
such as the Euler characteristics with compact supports in $\Z$ or the virtual
Poincar\'e polynomial in $\Z[u]$ (cf. \cite{MCP}). We obtain therefore similar
realizations for $K_0(\Var_{\R})$ by composition with the realizations of
$K_0(\Var_{\R})$ in $K_0(\R\Var)$.

\subsection{Basic semialgebraic formulas.}
Let us now specify the definition of the Grothendieck ring 
$K_0(BSA_{\R})$
of basic semialgebraic formulas. This definition is inspired by \cite{DL3}. 
 The ring $K_0(BSA_{\R})$ will
contain $K_0(\Var_{\R})$ as a subring  (Proposition \ref{incl}) and will be projected 
on the ring $K_0(\Var_{\R})\otimes \Z[\frac{1}{2}]$ 
(Theorem \ref{thm-prin}) by 
an explicit computational process.

A basic semialgebraic formula $A$ in $n$ variables is defined as a 
finite
number of equations, inequations and inequalities, namely there exist
$P_i, Q_j, R_k
\in \R[X_1,\ldots,X_n]$, for $i=1,\ldots,p$, $j=1,\ldots,q$ and
$k=1,\ldots,r$, such that $A(\R)$ is equal to the set of points 
$x\in \A^n$ such that
$$P_i(x)=0, Q_j(x) \neq
0,R_k(x)>0,~i=1,\ldots,p,~j=1,\ldots,q, ~k=1,\ldots,r.$$
The relations $Q_j(x) \neq 0$ are called inequations and the relations 
$R_k(x)>0$ are called inequalities.
We will simply denote a basic semialgebraic formula by 
$$A=\{P_i=0, Q_j\neq 0, R_k>0,~i=1,\ldots,p,~j=1,\ldots,q, ~k=1,\ldots,r \}.$$
In particular   $A$ is not characterized by its  real points $A(\R)$, that is
by the real
solutions of these equations, inequations and inequalities, but by these
equations, inequations and inequalities themselves.  

We will consider basic semialgebraic formulas up to algebraic
isomorphisms, when the basic semialgebraic formulas are defined without
inequality.

\begin{remark} In the sequel, we will allow ourselves to use the notation
  $\{P<0\}$ for the basic semialgebraic formula $\{-P>0\}$ and similarly $\{P>1\}$ instead of
  $\{P-1>0\}$, where $P$ denotes a polynomial with real coefficients. Furthermore given two basic semialgebraic formulas $A$ and $B$, the notation $\{A,B\}$ will denote the basic formula
  with equations, inequations and inequalities coming from $A$
  and $B$ together.
\end{remark}

We define the Grothendieck ring $K_0(BSA_{\R})$ of basic semialgebraic
formulas as the free abelian ring generated by basic semialgebraic formulas $[A]$,
up to algebraic isomorphim when the formula $A$ has no 
inequality, and subject to the three following   relations
\begin{enumerate}
\item (\textit{algebraic additivity}) $$[A]=[A,S=0]+[A,
  \{S\neq 0\}]$$ where $A$ is a basic semialgebraic formula in $n$ variables and
  $S\in \R[X_1,\ldots,X_n]$.
\item (\textit{semialgebraic additivity}) $$[A,R\neq 0]=[A, R>0]+
[A,-R>0]$$ where $A$ is a basic semialgebraic
  formula in $n$ variables and $R\in \R[X_1,\ldots,X_n]$.
  
  \item (\textit{product}) The product of basic semialgebraic formulas, defined by taking the
conjonction of the formulas with disjoint sets of free variables, induces the ring 
product on
$K_0(BSA_{\R})$. In other words we   consider the relation 
$$ [A,B]=[A]\cdot[B], $$
for $A$ and $B$ basic real semialgebraic formulas with disjoint set of variables.
\end{enumerate}

\begin{remark}\label{rmk-iso} 
\begin{enumerate}
\item Contrary to the Grothendieck ring of algebraic varieties or algebraic sets,
  we do not consider isomorphism classes of basic real semialgebraic formulas in
  the definition of $K_0(BSA_{\R})$. As a consequence the realization 
  we are interested in does depend in a crucial way on the
  description of the basic semialgebraic set as a basic semialgebraic formula. 
For instance $\{X-1>0\}$ and $\{X>0,X-1>0\}$ will have different measures.
\item One may decide to enlarge the basic semialgebraic formulas with non-strict inequalities 
by imposing, by convention, that the measure of $\{ A,R \geq 0\}$, for $A$ a basic 
semialgebraic
  formula in $n$ variables and $R\in \R[X_1,\ldots,X_n]$, is the sum of the measures of 
$\{A, R > 0\}$ and of $\{A, R = 0\}$.
\end{enumerate}
\end{remark}

\begin{prop}\label{incl} The natural map $i$ from $K_0(\Var_{\R})$ that associates to 
an affine real algebraic variety its value in the Grothendieck ring $K_0(BSA_{\R})$ 
of basic real semialgebraic formulas is an injective morphism
$$i:K_0(\Var_{\R}) \longrightarrow K_0(BSA_{\R}).$$
\end{prop}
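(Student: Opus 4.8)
The plan is to exhibit an explicit left inverse to the map $i$ (defined on generators and relations), using the realization $\chi$ announced in Theorem \ref{thm-prin}, together with the known fact that the analogous composite on $K_0(\Var_\R)$ is the identity. First I would observe that $i$ is well-defined as a ring morphism: the additivity relation defining $K_0(\Var_\R)$, namely $[X]=[Y]+[X\setminus Y]$ for $Y\subset X$ a closed subvariety, is exactly a special case of the \emph{algebraic additivity} relation in $K_0(BSA_\R)$ (the case where $A$ is the defining formula of $X$ and $S$ cuts out $Y$ inside $X$), and the multiplicativity of $i$ is the \emph{product} relation restricted to formulas without inequalities. So $i$ respects all relations and is a ring homomorphism.

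For injectivity, the key point is that there is a ring morphism $\chi:K_0(BSA_\R)\to K_0(\Var_\R)\otimes\Z[\tfrac12]$ which restricts to the identity on $K_0(\Var_\R)$ (this is precisely the content of Theorem \ref{thm-prin} / the statement about $\chi$ restricting to the identity on $K_0(\Var_\R)\hookrightarrow K_0(BSA_\R)$). Granting that, the composite
\[
K_0(\Var_\R)\xrightarrow{\ i\ }K_0(BSA_\R)\xrightarrow{\ \chi\ }K_0(\Var_\R)\otimes\Z[\tfrac12]
\]
is the canonical map $[X]\mapsto [X]\otimes 1$. Since $K_0(\Var_\R)$ is a free abelian group on isomorphism classes of Zariski-constructible varieties and tensoring with $\Z[\tfrac12]$ kills only torsion, the canonical map $K_0(\Var_\R)\to K_0(\Var_\R)\otimes\Z[\tfrac12]$ is injective provided $K_0(\Var_\R)$ is torsion-free. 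Thus $\chi\circ i$ is injective, and therefore $i$ is injective.

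The one genuine obstacle is justifying that $K_0(\Var_\R)$ has no $2$-torsion (equivalently, no torsion at all, or at least none that is $2$-power), so that $-\otimes\Z[\tfrac12]$ is injective on it. I would handle this by invoking the virtual Poincar\'e polynomial morphism $\beta:K_0(\Var_\R)\to\Z[u]$ (cited in the introduction via \cite{MCP}): since $\Z[u]$ is torsion-free, any element of $K_0(\Var_\R)$ that is $2$-power torsion lies in $\ker\beta$, so it suffices to separate the image of $i$ modulo $\ker\beta$. But $\beta$ is precisely the composite $K_0(\Var_\R)\to K_0(\Var_\R)\otimes\Z[\tfrac12]\xrightarrow{\beta}\Z[\tfrac12][u]$, which lands in $\Z[u]$ and is already known to be a well-behaved realization; so the argument can be repackaged as: $i$ followed by $\chi$ followed by $\beta$ equals $\beta$ on $K_0(\Var_\R)$, hence $\ker i\subset\ker\beta$. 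To finish, I would either cite that $\beta$ is injective in the relevant range (it is not in general, since $K_0(\Var_\R)$ is large), or, more safely, argue directly at the level of the free generators: if $\sum a_i[X_i]\in K_0(\Var_\R)$ maps to $0$ in $K_0(BSA_\R)$, then applying $\chi$ gives $\sum a_i[X_i]=0$ in $K_0(\Var_\R)\otimes\Z[\tfrac12]$, i.e. $2^N\sum a_i[X_i]=0$ in $K_0(\Var_\R)$ for some $N$; then one uses that $K_0(\Var_\R)$, being generated by iso-classes subject only to the additivity relation, is a quotient of a free abelian group by a subgroup generated by elements of the form $[X]-[Y]-[X\setminus Y]$, and a scissor-type/weak-factorization or stratification argument (or simply the existence of the additive invariant $\beta$ with torsion-free target applied carefully) shows this quotient is torsion-free. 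The cleanest write-up is: the existence of \emph{any} additive invariant into a torsion-free ring that is injective on the $\Z[\tfrac12]$-span of the generators suffices; since $\chi$ is injective on $i(K_0(\Var_\R))$ by construction and its target differs from $K_0(\Var_\R)$ only by inverting $2$, injectivity of $i$ reduces to torsion-freeness of $K_0(\Var_\R)$, which I would state as a known property (or deduce from the virtual Poincar\'e polynomial) and then conclude.
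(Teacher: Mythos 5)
Your approach is genuinely different from the paper's, and it has a real gap. The paper does not use $\chi$ at all: it constructs an explicit set-theoretic left inverse $j:K_0(BSA_{\R})\to K_0(\Var_{\R})$ directly, sending a class $a$ to $[Z_1]+\cdots+[Z_m]$ whenever $a$ can be represented in $K_0(BSA_{\R})$ by such a sum of classes of Zariski-constructible sets, and to $0$ otherwise; the key verification is that this is well defined (any two such representatives are already equal in $K_0(\Var_{\R})$). That $j$ fails to be a group morphism (as the paper itself notes immediately after) is irrelevant, since a left inverse as a map of sets is all one needs for injectivity. Crucially, this construction requires no information whatsoever about the torsion of $K_0(\Var_{\R})$.

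Your route instead passes through the realization $\chi$ and reduces injectivity of $i$ to injectivity of the localization map $K_0(\Var_{\R})\to K_0(\Var_{\R})\otimes\Z[\frac12]$, i.e.\ to the absence of $2$-power torsion in $K_0(\Var_{\R})$. You correctly identify this as the obstacle, but you do not overcome it: the observation that $\beta$ has torsion-free target only shows that $2$-power torsion lies in $\ker\beta$, not that it vanishes, and $\ker\beta$ is not known to be small or torsion-free. The appeal to ``a scissor-type/weak-factorization or stratification argument'' for torsion-freeness of $K_0(\Var_{\R})$ is not something that can be cited as known; indeed, torsion in Grothendieck rings of varieties (over $\C$, let alone $\R$) is a delicate and largely open matter, so this step cannot be waved through. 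As written, your proof therefore does not establish the proposition. The fix is exactly what the paper does: bypass $\chi$ and build the left inverse $j$ by hand, so that no torsion hypothesis ever enters. (A smaller point: the fact that $\chi\circ i$ equals the canonical map into $K_0(\Var_{\R})\otimes\Z[\frac12]$ does follow immediately from the definition of $\chi$ on formulas without inequalities, so there is no true circularity in invoking that part of Theorem~\ref{thm-prin}; the problem is entirely in the missing torsion-freeness.)
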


We therefore identify $K_0(\Var_{\R})$ with a subring of $K_0(BSA_{\R}).$

\begin{proof} We construct a left inverse $j$ of $i$ as follows. Let
  $a \in K_0(BSA_{\R})$ be a sum of products of measures of
  basic semialgebraic formulas. If there exist Zariski
  constructible real algebraic sets $Z_1,\ldots,Z_m$ such that
  $[Z_1]+\cdots+[Z_m]$ is equal to $a$ in
  $K_0(BSA_{\R})$, then we define the image of $a$ by
  $j$ to be
$$j(a)=[Z_1]+\cdots+[Z_m] \in K_0(\Var_{\R}).$$
Otherwise, the image of $a$ by
  $j$ is defined to be zero in $K_0(\Var_{\R})$.
The map $j$ is well-defined. Indeed, if $Y_1,\ldots, Y_l$ are other
Zariski constructible sets such that $[Y_1]+\cdots+[Y_l]$ is equal to
$a$ in $K_0(BSA_{\R})$, then 
$$[Y_1]+\cdots+[Y_l]=[Z_1]+\cdots+[Z_m]$$
in $K_0(BSA_{\R})$. This equality still holds in $K_0(\Var_{\R})$ by
definition of the structure ring of $K_0(\Var_{\R})$ and the fact that $j$ defines a left inverse of $i$ is immediate.
\end{proof}

\begin{remark} Note however that the map $j$ constructed in the proof
  of Proposition \ref{incl} is not a group
morphism. For instance $j([X>0])=j([X<0])=0$ whereas
$j([X\neq 0])=\L-1$.
\end{remark}
\section{A realization of $K_0(BSA_{\R})$}

An example of a ring morphism from $K_0(BSA_{\R})$ to $\mathbb Z$ is given by
the Euler characteristic with compact supports $\chi_c$. We construct in this section a
realization for elements in $K_0(BSA_{\R})$ with values in the ring of
polynomials with coefficient in $\Z[\frac{1}{2}]$. This realization specializes to the 
Euler characteristic with compact supports. To
this aim, we construct a ring morphism from
$K_0(BSA_{\R})$ to the tensor product of $K_0(\Var_{\R})$ with
$\Z[\frac{1}{2}]$.

\subsection{The realization.}

We define a morphism $\chi$ from the ring $K_0(BSA_{\R})$ to the ring $K_0(\Var_{\R})\otimes \Z[\frac{1}{2}]$ as follows. Let
$A$ be a basic semialgebraic formula without inequality. We assign to $A$ its value $\chi(A)=[A]$ in $K_0(\Var_{\R})$ as a 
constructible set. We proceed now by induction on the number of
inequalities in the description of the basic semialgebraic formulas. 
Assuming that we have defined $\chi$ for basic
semialgebraic formulas with at most $k$ inequalities, $k\in \N$, let $A$
be  a basic real semialgebraic formula with $n$ variables and at most $k$ inequalities and let us consider $R\in
\R[X_1,\ldots,X_n]$. Define $\chi([A, R>0])$ by
$$\chi([A, R>0]):=\frac{1}{4}\big(\chi([A, Y^2=R])-\chi([A,
Y^2=-R])\big)+\frac{1}{2} \chi([A, R\neq 0]),$$
where $\{A, Y^2=\pm R\}$ is a basic real semialgebraic formula with $n+1$ variables, 
with at most $k$ inequalities and $\{A, R\neq 0\}$ is a 
basic semialgebraic formula with $n$ variables with at most $k$ inequalities.

\begin{remark}\label{rmk-ori}
The way to define $\chi$ may be seen as an average of two different
natural ways of understanding a basic semialgebraic formula as a quotient
of algebraic varieties. Namely, for a basic semialgebraic formula in
$n$ variables of the form $\{R>0\}$, we may see its set of real 
points as the projection, with fibre two points,  of
$\{Y^2=R\}$ minus the zero set of $R$, or as the complement 
of the projection of $Y^2=-R$. The algebraic
average of these two possible points of view is 
$$\frac{1}{2}\Big(\big(\frac{1}{2}[Y^2=R]-[R=0]\big)+ \big( \L^n-\frac{1}{2}[Y^2=-R]\big) \Big),$$
which, considering that $ \L^n-[R=0]= [R\not=0]$, gives for $\chi(R>0)$ the expression 
just defined above. 
\end{remark}

We give below the general  formula that computes the measure of a basic
semialgebraic formula in terms of the measure of real algebraic varieties.

\begin{prop}\label{prop-alg} Let $Z$ be a constructible set in $\R^n$ and take $R_k\in
  \R[X_1,\ldots,X_n]$, with $k=1,\ldots,r$. For $I\subset
  \{1,\ldots,r\}$ a subset of cardinal $\sharp I=i$ and $\eps \in \{\pm
  1\}^i$, we denote by $R_{I,\eps}$ the real constructible set defined by
$$R_{I,\eps}=\{Y_j^2=\eps_jR_j(X),j\in I;~~R_k(X)\neq 0, k\notin I\}.$$
Then
  $\chi([Z,R_k>0,~k=1,\ldots,r])$ is equal to
$$\sum_{i=0}^r \frac{1}{2^{r+i}}\sum_{I\subset
  \{1,\ldots,r\},\sharp I=i}\sum_{\eps \in \{\pm 1\}^i}(\prod_{j\in I}\eps_j)
[Z,R_{I,\eps}]$$
\end{prop}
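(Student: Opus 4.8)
The natural approach is induction on $r$, the number of inequalities, using the defining recursion for $\chi$. For $r=0$ the formula reduces to $\chi([Z]) = [Z]$, which holds by definition since there are no inequalities: the only term is $i=0$, $I = \emptyset$, $\eps \in \{\pm1\}^0$ (a single empty tuple), with empty product equal to $1$ and $R_{\emptyset,\emptyset} = \{Z\}$ itself, giving coefficient $\frac{1}{2^{0}} = 1$. So the base case is immediate.

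For the inductive step, suppose the formula holds for $r-1$ inequalities and consider $[Z, R_1>0, \ldots, R_r>0]$. I would single out $R_r$ and apply the definition of $\chi$:
\[
\chi([Z,R_1>0,\ldots,R_r>0]) = \tfrac14\big(\chi([W, Y_r^2 = R_r]) - \chi([W, Y_r^2 = -R_r])\big) + \tfrac12 \chi([W, R_r \neq 0]),
\]
where $W = \{Z, R_1>0,\ldots,R_{r-1}>0\}$. Each of the three formulas on the right has only $r-1$ inequalities (the conditions $Y_r^2 = \pm R_r$ and $R_r \neq 0$ being equations/inequations, not inequalities), so the inductive hypothesis applies to each, now relative to the constructible base sets $\{Z, Y_r^2 = R_r\}$, $\{Z, Y_r^2 = -R_r\}$, and $\{Z, R_r \neq 0\}$ respectively, with the $r-1$ polynomials $R_1, \ldots, R_{r-1}$.

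The bookkeeping step is then to expand these three sums over subsets $I' \subset \{1,\ldots,r-1\}$ and signs $\eps' \in \{\pm1\}^{i'}$ and reorganize the total into a sum over subsets $I \subset \{1,\ldots,r\}$. Subsets $I$ of $\{1,\ldots,r\}$ split into those with $r \notin I$ (these come from the third term $\tfrac12\chi([W,R_r\neq 0])$, since $R_r \neq 0$ is exactly the condition "$R_r(X)\neq 0, r \notin I$", and the extra factor $\tfrac12$ converts $\tfrac{1}{2^{(r-1)+i}}$ into $\tfrac{1}{2^{r+i}}$) and those with $r \in I$ (these come from the first two terms, which together contribute $\tfrac14\big([\,\cdot\,,Y_r^2=R_r] - [\,\cdot\,,Y_r^2=-R_r]\big)$, i.e. exactly the two sign choices $\eps_r = +1$ and $\eps_r = -1$ with the sign $\eps_r$ appearing in the product $\prod_{j\in I}\eps_j$, and $\tfrac14 \cdot \tfrac{1}{2^{(r-1)+(i-1)}} = \tfrac{1}{2^{r+i}}$). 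In both cases one checks that $R_{I',\eps'}$ adjoined with the extra equation/inequation on $Y_r$ is precisely $R_{I,\eps}$, and the power of $2$ and the product of signs match the claimed formula. I would present this matching termwise, organized by whether $r \in I$.

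\textbf{Main obstacle.} The substantive point is not the induction itself but verifying that the exponent $r+i$ and the sign product transform correctly under the recursion — in particular that the "extra" $\tfrac14$ in front of the two $Y_r^2 = \pm R_r$ terms combines with the index shift $i \mapsto i-1$ (since $I$ gains the element $r$) to yield exactly $\tfrac{1}{2^{r+i}}$, while the "extra" $\tfrac12$ in front of the $R_r \neq 0$ term combines with no index shift to give the same. Keeping the two families of subsets (those containing $r$ and those not) cleanly separated is what makes the calculation transparent; once that split is set up, each piece is a direct application of the inductive hypothesis and the arithmetic is routine.
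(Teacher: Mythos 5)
Your proposal is correct and takes essentially the same route as the paper's own proof: induction on the number of inequalities, peeling off the last one $R_r>0$ (the paper writes $R_{r+1}>0$) via the defining recursion for $\chi$, applying the inductive hypothesis to the three resulting formulas (each with one fewer inequality, over the new base sets $\{Z,Y_r^2=\pm R_r\}$ and $\{Z,R_r\neq 0\}$), and then matching terms by whether or not the new index belongs to $I$. The coefficient and sign bookkeeping you sketch ($\tfrac14\cdot\tfrac{1}{2^{(r-1)+(i-1)}}=\tfrac12\cdot\tfrac{1}{2^{(r-1)+i}}=\tfrac{1}{2^{r+i}}$, with $\eps_r$ absorbed into the sign product) is exactly the computation the paper carries out.
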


\begin{proof} If $r=1$ it follows from the
  definition of $\chi$. We prove the general result by induction on
  $r\in \N$. Assume $Z=\R^n$ to simplify notation. Take  $R_k\in \R[X_1,\ldots,X_n]$, with
  $k=1,\ldots,r+1$. 
Denote by $A$ the formula $R_1>0,\ldots, R_r>0$. By definition of $\chi$ we obtain
$$\chi([A,R_{r+1}>0])=\frac{1}{4}(\chi([A, Y^2=R_{r+1}])-\chi([A, Y^2=-R_{r+1}]))+\frac{1}{2}\chi([A, R_{r+1}\neq 0]).$$
Now we can use the induction assumption to express the terms in the right hand side of the formula upstair as
$$\sum_{i=0}^r \frac{1}{2^{r+i}}\sum_{I\subset
  \{1,\ldots,r\},\sharp I=i}\sum_{\eps \in \{\pm 1\}^i}(\prod_{j\in I}\eps_j) \big(\frac{1}{4}([R_{I,\eps}, Y^2=R_{r+1}]-[R_{I,\eps}, Y^2=-R_{r+1}])$$
  $$+\frac{1}{2}[R_{I,\eps}, R_{r+1}\neq 0]  \big) $$
Choose $I\subset
  \{1,\ldots,r\}$ a subset of cardinal $\sharp I=i$ and $\eps \in \{\pm
  1\}^i$. Then, we obtain from the definition of $\chi$ that
$$\frac{1}{4}([R_{I,\eps}, Y^2=R_{r+1}]-[R_{I,\eps}, Y^2=-R_{r+1}])+\frac{1}{2}[R_{I,\eps}, R_{r+1}\neq 0]$$
is equal to 
$$\frac{1}{4}([R_{I\cup \{r+1\},\eps^+}]-[R_{I\cup \{r+1\},\eps^-}])+\frac{1}{2}[R_{\tilde I,\eps}]$$
where $\eps^+=(\eps_1,\ldots,\eps_r,1)$,
$\eps^-=(\eps_1,\ldots,\eps_r,-1)$ and $\tilde I$ denotes $I$ as a
subset of $\{1,\ldots,r+1\}$. Therefore 
$$\frac{1}{2^{r+i}}(\prod_{j\in I}\eps_j)[R_{r+1}>0,
R_{I,\eps}]$$
is equal to
$$\frac{1}{2^{(r+1)+(i+1)}}(\prod_{j\in I}\eps_j)([R_{I\cup
  \{r+1\},\eps^+}]-[R_{I\cup
  \{r+1\},\eps^-}])+\frac{1}{2^{(r+1)+i}}(\prod_{j\in
    I}\eps_j)[R_{\tilde I,\eps}]$$
which gives the result. 
\end{proof}

The morphism $\chi$ is then  defined on $K_0(BSA_{\R})$.

\begin{thm}\label{thm-prin} The map $$\chi: K_0(BSA_{\R}) \longrightarrow
  K_0(\Var_{\R})\otimes \Z[\frac{1}{2}]$$ is a ring morphism that is
  the identity on $K_0(\Var_{\R})\subset K_0(BSA_{\R})$.
\end{thm}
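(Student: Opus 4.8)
The statement has two halves: (i) $\chi$ is well-defined, i.e. it respects the three defining relations of $K_0(BSA_\R)$ and the algebraic-isomorphism identifications; and (ii) $\chi$ is multiplicative and restricts to the identity on $K_0(\Var_\R)$. I would treat well-definedness first, since the construction of $\chi$ proceeds by induction on the number of inequalities and one must check that the output does not depend on the order in which inequalities are eliminated, nor on the choices made.

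\emph{Well-definedness.} The only genuine subtlety is that in the recursive clause $\chi([A,R>0]) = \tfrac14(\chi([A,Y^2=R])-\chi([A,Y^2=-R])) + \tfrac12\chi([A,R\neq 0])$ the formula $A$ has $k$ inequalities, and the three terms on the right involve formulas with $k$ inequalities; so to define $\chi$ on a formula with $k+1$ inequalities I have implicitly ordered the inequalities and peeled off the last one. The closed formula of Proposition \ref{prop-alg}, applied with $Z$ replaced by an arbitrary \emph{constructible} $A$ (the proof there works verbatim for constructible $A$, not just $A=\R^n$), is visibly symmetric in the inequalities $R_1,\dots,R_r$, which shows the construction is independent of the chosen order — this is the key point and the main obstacle to a clean argument. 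Then I must check that $\chi$ kills the defining relations: algebraic additivity $[A]=[A,S=0]+[A,S\neq0]$ holds because on formulas with no inequality $\chi$ is literally the class in $K_0(\Var_\R)$, where this is the additivity relation, and the general case follows by feeding this through the (multilinear) closed formula of Proposition \ref{prop-alg}; semialgebraic additivity $[A,R\neq0]=[A,R>0]+[A,-R>0]$ is exactly the content of the recursive definition, since summing the defining expression for $\chi([A,R>0])$ and for $\chi([A,-R>0])$ makes the $\tfrac14(\cdots)$ parts cancel and the two $\tfrac12\chi([A,R\neq0])$ parts add to $\chi([A,R\neq0])$; and the product relation is handled together with multiplicativity below. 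Finally, the identifications up to algebraic isomorphism on inequality-free formulas are respected because on such formulas $\chi$ is the class in $K_0(\Var_\R)$, where isomorphic varieties have equal class, and Proposition \ref{prop-alg} expresses every $\chi$-value as a $\Z[\tfrac12]$-combination of such classes.

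\emph{Ring morphism and restriction to $K_0(\Var_\R)$.} Since $\chi([A])=[A]$ for $A$ without inequality, and $K_0(\Var_\R)$ is (by definition) generated by such classes with the additivity relation, $\chi$ restricts to the identity on $i(K_0(\Var_\R))\subset K_0(BSA_\R)$ — this uses Proposition \ref{incl} to know that the subring really is a copy of $K_0(\Var_\R)$. For multiplicativity it suffices, since both sides are $\Z$-bilinear and $K_0(BSA_\R)$ is generated as a ring by classes of basic formulas, to prove $\chi([A]\cdot[B]) = \chi([A])\cdot\chi([B])$ for basic formulas $A,B$ in disjoint variables, i.e. $\chi([A,B]) = \chi([A])\chi([B])$. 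I would prove this by a double induction on the number of inequalities in $A$ and in $B$. The base case — $A,B$ both without inequality — is just $[A\times B]=[A]\cdot[B]$ in $K_0(\Var_\R)$, the product relation there. For the inductive step, write $A=\{A',R>0\}$ with $A'$ having one fewer inequality; apply the recursive definition to both $\chi([A,B])=\chi([A',R>0,B])$ and to $\chi([A',R>0])$, note that forming the conjunction with $B$ commutes with the operations $\{\,\cdot\,,Y^2=\pm R\}$ and $\{\,\cdot\,,R\neq0\}$ (as $B$ involves disjoint variables, in particular not $Y$), and invoke the induction hypothesis on the three resulting pairs $(\{A',Y^2=\pm R\},B)$ and $(\{A',R\neq0\},B)$, then symmetrically reduce the number of inequalities of $B$. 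Collecting terms gives $\chi([A,B])=\chi([A])\chi([B])$.

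\emph{Summary of the hard part.} The crux is the interplay between the \emph{recursive} definition of $\chi$ and the need for \emph{order-independence}: the honest way to see it is to establish the explicit symmetric formula of Proposition \ref{prop-alg} for arbitrary constructible $A$ first, use it to \emph{redefine} $\chi$ non-recursively, and then check the three relations and multiplicativity directly on that closed formula, where everything becomes multilinear bookkeeping in the classes $[A,R_{I,\eps}]\in K_0(\Var_\R)$.
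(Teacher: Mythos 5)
Your proof is correct and takes essentially the same route as the paper: verify semialgebraic additivity directly by cancellation of the $\tfrac14(\cdots)$ terms, and reduce algebraic additivity, multiplicativity, and the identity on $K_0(\Var_\R)$ to the closed symmetric formula of Proposition \ref{prop-alg} together with the left-inverse construction of Proposition \ref{incl}. Your explicit discussion of well-definedness (order-independence of the inductive elimination of inequalities) addresses a point the paper leaves implicit, and your observation that Proposition \ref{prop-alg} settles it — because the closed formula is symmetric in the $R_k$'s and is reached by any peeling order — is the right way to close that gap; the double induction you sketch for multiplicativity is an acceptable alternative, though as you note yourself the closed-formula bookkeeping is cleaner and is what the paper does.
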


\begin{proof} We must prove that the given definition of $\chi$ is
  compatible with the algebraic and semialgebraic additivities. However the semialgebraic additivity follows directly from the definition of
$\chi$. Indeed, if $A$ is a basic semialgebraic formula and $R$ a real
polynomial, then the sum of $\chi([A ,R>0])$ and $\chi([A ,-R>0])$ is equal to
$$\frac{1}{4}\big(\chi([A,Y^2=R])-\chi([A,Y^2=-R])\big)+\frac{1}{2}\chi([A,R\neq 0])$$
$$+\frac{1}{4}\big(\chi([A,Y^2=-R])-\chi([A,Y^2=R])\big)+\frac{1}{2}
\chi([A,-R\neq 0])$$
$$=\chi([A,-R\neq 0]).$$

The algebraic additivity as well as the multiplicativity follow from Proposition \ref{prop-alg} that
enables to express the measure of a basic semialgebraic formula in terms
of algebraic varieties for which additivity and multiplicativity hold.
We conclude by noting that we may construct a left inverse to $\chi$
restricted to $K_0(\Var_{\R})$ in the same way as in the proof of
Proposition \ref{incl}.
\end{proof}

\begin{ex}\label{ex1}
\begin{enumerate}
\item A half-line defined by $X>0$ has measure in $K_0(\Var_{\R})\otimes
\Z[\frac{1}{2}]$ half of the value of the line minus one point, as expected, since
by definition 
$$\chi([X>0])=\frac{1}{4}(\L-\L)+\frac{1}{2}\big(\L-1)=\frac{1}{2}\big(\L-1).$$
However, if we add one more inequality, like $\{X>0,X>-1\}$, then
the measure has more complexity. We will see in section \ref{sect-virt} that, evaluated in the polynomial ring $\Z[\frac{1}{2}][u]$ we
obtain in that case
$$\beta([X>0,X>-1])=\frac{5u-11}{16}.$$
\item Using the multiplicativity, we find the measure of the
  half-plane and the measure of the quarter
  plane as expected
$$\chi([X_1>0])=\frac{1}{2}(\L^2-\L)$$
and
$$\chi([X_1>0,X_2>0])=\frac{1}{4}(\L-1)^2.$$
\end{enumerate}
\end{ex}

\begin{remark}\label{rmks}
\begin{enumerate}
\item  Let $R\in \R[X_1,\ldots,X_n]$ be odd. Then 
$$\chi([R>0])=\chi([R<0])=\frac{[R\neq 0]}{2}.$$ 
Indeed, the varieties $Y^2=R(X)$ and $Y^2=-R(X)$
  are isomorphic via $X\mapsto -X$, and the result follows from the
  definition of $\chi$.
\item\label{nontrivial} The ring morphism from $K_0(\Var_{\R})$ to
  $K_0(\R\Var)$ gives a realization from the ring $K_0(BSA_{\R})$ to the ring 
$K_0(\R\Var)\otimes \Z[\frac{1}{2}]$
for which the measure of a
  real algebraic variety without real point is zero, this is why it is often 
convenient to push the computations to the ring  
$K_0(\R\Var)\otimes \Z[\frac{1}{2}]$ rather than staying at the higher level
of $K_0(\Var_{\R})\otimes \Z[\frac{1}{2}]$. However we have to notice that the
  measure of a basic real semialgebraic formula without real point is not
  necessarily zero in $K_0(\R\Var)\otimes \Z[\frac{1}{2}]$. 
For instance, let us compute the measure of $X^2+1>0$ in 
$K_0(\R\Var)\otimes \Z[\frac{1}{2}]$. 
By  definition of $\chi$ we obtain that $\chi([X^2+1>0])$ is equal to
$$\frac{1}{4}\big(\chi([Y^2=X^2+1])-\chi([Y^2=-X^2-1])\big)+\frac{1}{2}\chi([X^2+1
\neq 0])$$
$$=\frac{1}{4}(\L-1)+\frac{1}{2}\L=\frac{1}{4}(3\L-1).$$
By additivity we have 
$$ \chi([X^2+1<0])=\chi([X^2+1\not=0])-\chi([X^2+1>0])$$
$$ =\L-\chi([X^2+1=0])-\chi([X^2+1>0]). $$
But since $\chi([X^2+1=0])=0$ in $K_0(\R\Var)\otimes \Z[\frac{1}{2}]$,
we obtain that the measure of $\{X^2+1<0\}$ in $K_0(\R\Var)\otimes \Z[\frac{1}{2}]$, whose real points set is empty, is 
$$\chi([X^2+1<0])=\frac{1}{4}(\L+1).$$

\item In a similar way, the basic semialgebraic formula $\{P>0,-P>0\}$ with
  $P(X)=1+X^2$, whose set of real points is empty, has measure
$$\chi([P>0,-P>0])=\frac{1}{8}(\L+1).$$
\end{enumerate}
\end{remark}


\subsection{Isomorphism between basic semialgebraic formulas}
In this section we give a condition for two basic semialgebraic formulas to have the same realization by $\chi$. It deals with the
complexification of the algebraic liftings of the basic semialgebraic
formulas. 

Let $X$ be a
real algebraic subvariety of $\mathbb R^n$ defined by $P_i
\in \R[X_1,\ldots,X_n]$, for $i=1,\ldots,r$. The complexification
$X_{\C}$ of $X$ is defined to be the complex algebraic subvariety of
$\C^n$ defined by the same polynomials $P_1,\ldots,P_r$. We define
similarly the complexification of a real algebraic map.

Let $Y\subset \R^n$ be a Zariski
  constructible subset of $\R^n$ and take $R_1,\ldots,R_r
\in \R[X_1,\ldots,X_n]$. Let $A$ denote the basic
semialgebraic formula of $\R^n$ defined by $Y$ together with the inequalities
$R_1>0,\ldots,R_r>0$, and $V$ denote the Zariski constructible subset
of $\R^{n+r}$ defined by 
$$V=\{Y,Y_1^2=R_1,\ldots,Y_r^2=R_r\}.$$
Note that $V$ is endowed with an action
of $\{\pm 1\}^r$ defined by multiplication by $-1$ on the
indeterminates $Y_1,\ldots,Y_r$.

 Let $Z\subset \R^n$ be a Zariski constructible subset of $\R^n$ and
 take similarly $S_1,\ldots,S_r
\in \R[X_1,\ldots,X_n]$. Let $B$ denote the basic
semialgebraic formula of $\R^n$ defined by $Z$ together with the inequalities
$S_1>0,\ldots,S_r>0$, and $W$ denote the Zariski constructible subset
of $\R^{n+r}$ defined by 
$$W=\{Z,Y_1^2=S_1,\ldots,Y_r^2=S_r\}.$$

\begin{definition} 
We say that the basic semialgebraic formulas $A$ and $B$ are isomorphic
if there exists a real algebraic isomorphism $\phi:V \longrightarrow
W$ between $V$ and $W$ which is equivariant with respect to the action
of $\{\pm 1\}^r$ on $V$ and $W$, and whose complexification
$\phi_{\C}$ induces a complex algebraic isomorphism between the
complexifications $V_{\C}$ and $W_{\C}$ of $V$ and $W$.
\end{definition}

\begin{remark}\label{rmk-1} Let us consider first the particular case  $Y=\R^n$, $Z=\R^n$ and $r=1$. 
Change moreover the notation as
follows. Put $V^+=V$ and $W^+=W$, and define $V^-=\{y^2=-R(x)\}$ and
$W^-=\{y^2=-S(x)\}$. 

Then the complex points $V_{\mathbb C}^+$ and $V_{\mathbb C}^-$ of
$V^+$ and $V^-$ are isomorphic via the complex (and not real)
isomorphism $(x,y)\mapsto (x,iy)$. Now, suppose that the basic
semialgebraic formula $\{R>0\}$ is isomorphic to $\{S>0\}$. Let $\phi=(f,g):(x,y)\mapsto (f(x,y),g(x,y))$ be the real isomorphism involved in the definition (that is $f$ and $g$ are defined by real equations, 
and moreover $f(x,-y)=f(x,y)$ and $g(x,-y)=-g(x,y)$). Then the following diagram 

$$ \begin{matrix} 
V^+_{\mathbb C}   &\buildrel{(f,g)}\over \longrightarrow & W^+_{\mathbb C}\\
 \buildrel{(x,y)\mapsto (x,iy)}\over \downarrow  &   &  \buildrel{(x,y)\mapsto (x,iy)}\over \downarrow \\
V^-_{\mathbb C}   & & W^-_{\mathbb C}\\ 
   \end{matrix} $$
induces a complex isomorphism $(F,G)$ between $ V^-_{\mathbb C}$ and $W^-_{\mathbb C}$ given by
$$(x,y) \mapsto (f(x,-iy),ig(x,-iy)).$$
In fact, this isomorphism is defined over $\mathbb R$ since
$$\overline {F(x,y)}=\overline {f(x,-iy)}=f(\overline x, \overline {-iy})=f(\overline x,i \overline y)=f(\overline x,-i \overline y)=F(\overline x,\overline y)$$ and
$$\overline {G(x,y)}=\overline {ig(x,-iy)}=-ig(\overline x, \overline
{-iy})=-ig(\overline x,i \overline y)=ig(\overline x,-i \overline
y)=G(\overline x,\overline y),$$
where the bar denotes complex conjugation. Therefore it induces a real
algebraic isomorphism between $V^-$ and $W^-$.

Moreover $g(x,0)=-g(x,0)$ so $g(x,0)=0$ and then the real algebraic sets $\{R=0\}$ and $\{S=0\}$ are also isomorphic.
\end{remark}

\begin{prop}\label{iso} If the basic semialgebraic formulas $A$ and $B$ are
  isomorphic, then $\chi([A])=\chi([B])$.
\end{prop}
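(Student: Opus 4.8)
The plan is to reduce the general statement to the case $r=1$ by induction, using the inductive definition of $\chi$, and then to exploit Proposition \ref{prop-alg} together with Remark \ref{rmk-1}.

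\medskip

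\textbf{Setup and strategy.} Recall from Proposition \ref{prop-alg} that $\chi([A])$ is a $\Z[\tfrac12]$-linear combination of classes $[Y,R_{I,\eps}]$, where $R_{I,\eps}=\{Y_j^2=\eps_jR_j, j\in I;\ R_k\neq 0, k\notin I\}$, and similarly $\chi([B])$ is the corresponding combination of the classes $[Z,S_{I,\eps}]$. So it suffices to show that for every $I\subset\{1,\dots,r\}$ and every sign vector $\eps\in\{\pm1\}^{\sharp I}$ one has $[Y,R_{I,\eps}]=[Z,S_{I,\eps}]$ in $K_0(\Var_\R)$; the coefficients $\tfrac{1}{2^{r+i}}(\prod_{j\in I}\eps_j)$ match automatically. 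Thus the whole problem is to produce, from the given equivariant isomorphism $\phi:V\to W$ (with complexification an isomorphism $V_\C\to W_\C$), an algebraic isomorphism $R_{I,\eps}\to S_{I,\eps}$ for each $(I,\eps)$.

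\medskip

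\textbf{From $\phi$ to the variants $R_{I,\eps}$.} First I would handle the sign changes: the map $V_\C\to V_\C$ that multiplies the coordinate $Y_j$ by $i$ for $j$ in some index set sends $\{Y_j^2=R_j\}$ to $\{Y_j^2=-R_j\}$, and composing $\phi_\C$ with these maps on both sides gives a complex isomorphism between the complexification of $\{Y,\,Y_j^2=\eps_jR_j\ (j\le r)\}$ and that of $\{Z,\,Y_j^2=\eps_jS_j\ (j\le r)\}$. Exactly as in Remark \ref{rmk-1}, one checks that this complex isomorphism commutes with complex conjugation---here one uses the equivariance of $\phi$ under $\{\pm1\}^r$, which forces each component of $\phi$ to be even or odd in the appropriate $Y_j$, so that conjugating the $i$-twisted map reproduces it---hence it is defined over $\R$ and restricts to a real algebraic isomorphism $\{Y,\,Y_j^2=\eps_jR_j\}\to\{Z,\,Y_j^2=\eps_jS_j\}$. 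Next, for the indices $k\notin I$ where we want $R_k\neq0$ rather than $Y_k^2=R_k$: the $\{\pm1\}^r$-equivariant isomorphism descends to the quotients, and on the locus where $R_k\neq0$ the quotient by the $Y_k$-involution is precisely (isomorphic to) the constructible set $\{R_k\neq0\}$ in the remaining variables; since $\phi$ matches $R_k$ with $S_k$ up to this quotient (again by the evenness of the $X$-components in $Y_k$, as in the relation $g(x,0)=0$ of Remark \ref{rmk-1}), one obtains the desired isomorphism $R_{I,\eps}\to S_{I,\eps}$. Taking classes in $K_0(\Var_\R)$ and summing gives $\chi([A])=\chi([B])$.

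\medskip

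\textbf{Main obstacle.} The delicate point is the bookkeeping in the second paragraph: verifying carefully that the equivariance of $\phi$ under the full group $\{\pm1\}^r$ translates into the right parity statements for all components of $\phi$ simultaneously, so that (i) the $i$-twists in the chosen coordinates produce a conjugation-invariant map, and (ii) passing to the $R_k\neq0$ loci for $k\notin I$ is compatible with the quotient by the sign action in those coordinates. Once the parity constraints are in hand, each individual verification is the routine conjugation computation already carried out in Remark \ref{rmk-1}; the work is in organizing it uniformly over all pairs $(I,\eps)$ rather than in any single new idea. I would therefore write the $r=1$ case in full (essentially Remark \ref{rmk-1}) and then indicate the induction on $r$, peeling off one inequality at a time using the inductive definition of $\chi$ and applying the one-variable argument to the coefficient varieties.
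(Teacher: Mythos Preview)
Your proposal is correct and follows essentially the same route as the paper, which simply says: by Proposition \ref{prop-alg} one is reduced to showing that the constructible sets $R_{I,\eps}$ for $A$ and for $B$ are pairwise isomorphic, and this is a direct generalization of Remark \ref{rmk-1}. Your elaboration of that generalization---the $i$-twists for $j\in I$, the parity constraints forced by $\{\pm1\}^r$-equivariance, and the passage to quotients for the indices $k\notin I$---is already more detailed than the paper's two-line sketch.
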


\begin{proof} Thanks to Proposition \ref{prop-alg}, we only need to prove that
  the real algebraic varieties $R_{I,\eps}$ corresponding to $A$ and
  $B$ are isomorphic two by two, which is a direct generalization of
  Remark \ref{rmk-1}.
\end{proof}

\section{Virtual Poincar\'e polynomial}

\subsection{Polynomial realization}\label{sect-virt}

The best realization known (with respect to the highest 
algebraic complexity of the realization ring) of the Grothendieck ring of real algebraic varieties is
given by the virtual Poincar\'e polynomial \cite{MCP}. This
polynomial, whose coefficients coincide with the Betti numbers with
coefficients in $\frac{\Z}{2\Z}$ when sets are compact and nonsingular,
has coefficient in $\Z$. As a corollary of Theorem \ref{thm-prin} we
obtain the following realization of $K_0(BSA_{\R})$ in $\Z[\frac{1}{2}][u]$.

\begin{prop} There exists a ring morphism
$$\beta:K_0(BSA_{\R}) \longrightarrow \Z[\frac{1}{2}][u]$$
whose restriction to $K_0(\Var_{\R})\subset K_0(BSA_{\R})$ coincides with
the virtual Poincar\'e polynomial.
\end{prop}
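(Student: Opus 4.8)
The plan is to obtain $\beta$ simply as the composition of the realization $\chi$ from Theorem \ref{thm-prin} with the virtual Poincar\'e polynomial of real algebraic varieties, extended $\Z[\tfrac12]$-linearly. First I would recall that the virtual Poincar\'e polynomial gives a ring morphism $\beta_{\Var}\colon K_0(\Var_\R)\to\Z[u]$ (see \cite{MCP}); tensoring with $\Z[\tfrac12]$ yields a ring morphism
$$\beta_{\Var}\otimes\mathrm{id}\colon K_0(\Var_\R)\otimes\Z[\tfrac12]\longrightarrow \Z[u]\otimes\Z[\tfrac12]=\Z[\tfrac12][u].$$
Then I would set $\beta:=(\beta_{\Var}\otimes\mathrm{id})\circ\chi$. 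Since both $\chi$ (by Theorem \ref{thm-prin}) and $\beta_{\Var}\otimes\mathrm{id}$ are ring morphisms, their composite $\beta$ is a ring morphism from $K_0(BSA_\R)$ to $\Z[\tfrac12][u]$, which is the existence claim.

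It remains to check the compatibility with the inclusion $K_0(\Var_\R)\hookrightarrow K_0(BSA_\R)$. By Theorem \ref{thm-prin}, $\chi$ restricts to the identity on $K_0(\Var_\R)$ (viewed inside $K_0(\Var_\R)\otimes\Z[\tfrac12]$ via $x\mapsto x\otimes 1$), so for $[X]\in K_0(\Var_\R)$ one has $\beta([X])=(\beta_{\Var}\otimes\mathrm{id})([X]\otimes 1)=\beta_{\Var}([X])$, which is exactly the virtual Poincar\'e polynomial of $X$. Hence the restriction of $\beta$ to $K_0(\Var_\R)\subset K_0(BSA_\R)$ coincides with the virtual Poincar\'e polynomial, as required.

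Concretely, combining this with the explicit formula of Proposition \ref{prop-alg}, for a basic semialgebraic formula $\{Z,R_k>0,\,k=1,\dots,r\}$ one gets
$$\beta([Z,R_k>0,\,k=1,\dots,r])=\sum_{i=0}^r\frac{1}{2^{r+i}}\sum_{\substack{I\subset\{1,\dots,r\}\\ \sharp I=i}}\sum_{\eps\in\{\pm1\}^i}\Big(\prod_{j\in I}\eps_j\Big)\,\beta_{\Var}([Z,R_{I,\eps}]),$$
which exhibits $\beta([F])$ as a $\Z[\tfrac12]$-linear combination of genuine virtual Poincar\'e polynomials and justifies the definition of $\beta(F)$ given in the introduction. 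There is essentially no obstacle here: the only point requiring a word of care is that tensoring a ring morphism with the flat $\Z$-algebra $\Z[\tfrac12]$ again gives a ring morphism and that $\Z[u]\otimes_\Z\Z[\tfrac12]$ is canonically $\Z[\tfrac12][u]$, both of which are routine. The statement is thus an immediate corollary of Theorem \ref{thm-prin}, as announced.
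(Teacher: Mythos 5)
Your proof is correct and follows exactly the paper's approach: the paper states this proposition "as a corollary of Theorem \ref{thm-prin}," meaning precisely that $\beta$ is obtained by composing $\chi$ with the $\Z[\tfrac12]$-linear extension of the virtual Poincar\'e polynomial morphism, and the compatibility on $K_0(\Var_\R)$ follows from $\chi$ restricting to the identity there.
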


The interest of such a realization is that it enables to make concrete computations.

\begin{ex}
\begin{enumerate}
\item The virtual Poincar\'e polynomial of the open disc $X_1^2+X_2^2<1$ is equal to 
$$\frac{1}{4}\big(\beta([Y^2=1-(X_1^2+X_2^2)])-\beta([Y^2=X_1^2+X_2^2-1])\big)+
\frac{1}{2}\beta([X_1^2+X_2^2\neq 1])$$
$$=\frac{1}{4}(u^2+1-u(u+1))+\frac{1}{2}(u^2-u-1)=\frac{1}{4}(2u^2-3u-1).$$
\item Let us compute the measure of the formula $X>a,X>b$ with $a\neq b \in \mathbb R$. By Proposition \ref{prop-alg}, we are lead to compute the virtual Poincar\'e polynomial of the real algebraic subsets of $\mathbb R^3$ defined by $\{y^2=\pm (x - a),~~z^2=\pm (x - b)\}$. These sets are isomorphic to $\{y^2 \pm z^2 = \pm (a - b)\}$, and we recognise either a circle, a hyperbola or the emptyset.

In particular, using the formula in Proposition \ref{prop-alg}, we obtain
$$\beta([X>a,X>b])=\frac{1}{16}(2(u-1)-(u+1))+\frac{1}{8}(2u-2u)+\frac{1}{8}(2-2)+\frac{1}{4}(u-2)=\frac{5u-11}{16}$$

\end{enumerate}
\end{ex}

\begin{remark} In case the set of real points of a basic semialgebraic
  formula is a real algebraic set (or even an arc symmetric set \cite{KK,F}), its
  virtual Poincar\'e polynomial does not coincide in general with the
  virtual Poincar\'e polynomial of the real algebraic set. For
  instance, the basic semialgebraic formula $X^2+1>0$, considered in
  Remark \ref{rmks}, has virtual Poincar\'e
  polynomial equal to
  $\frac{1}{4}(3u-1)$ whereas its set of points is a real line whose
  with virtual Poincar\'e polynomial equals  $u$ as a real algebraic set.
\end{remark}

Evaluating $u$ at an integer gives another realization, with
coefficient in $\Z[\frac{1}{2}]$.
The virtual Poincar\'e polynomial of a real algebraic variety, evaluated
at $u=-1$, coincides with its Euler characteristic with compact
supports \cite{MCP}. Indeed, evaluating the virtual Poincar\'e
polynomial of a basic semialgebraic formula gives also the Euler characteristic with compact
supports of its set of real points, and therefore has its values in $\Z$.

\begin{prop}\label{eval} The virtual Poincar\'e
polynomial $\beta(A)$ of a basic semialgebraic formula $A$ is equal to
the Euler characteristic with compact supports of its set of real
points $A(\R)$
when evaluated at $u=-1$. In other words
$$\beta(A)(-1)=\chi_c(A(\R)).$$
\end{prop}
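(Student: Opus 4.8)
The plan is to reduce the statement to the known fact that the virtual Poincar\'e polynomial of a real algebraic variety evaluates at $u=-1$ to its Euler characteristic with compact supports \cite{MCP}, together with the additivity and multiplicativity of $\chi_c$ on semialgebraic sets. First I would recall that, by Theorem \ref{thm-prin}, $\beta$ factors through $\chi:K_0(BSA_{\R})\to K_0(\Var_{\R})\otimes\Z[\frac{1}{2}]$, and that by Proposition \ref{prop-alg} the class $\chi([A])$ of a basic semialgebraic formula $A=\{Z,R_k>0,\ k=1,\dots,r\}$ is the explicit $\Z[\frac{1}{2}]$-linear combination
$$\sum_{i=0}^r\frac{1}{2^{r+i}}\sum_{\sharp I=i}\sum_{\eps\in\{\pm1\}^i}\Big(\prod_{j\in I}\eps_j\Big)[Z,R_{I,\eps}]$$
of classes of real algebraic varieties $R_{I,\eps}$. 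Applying $\beta$ and then evaluating at $u=-1$, the left-hand side becomes $\beta(A)(-1)$, while on the right-hand side each $\beta([Z,R_{I,\eps}])(-1)$ equals $\chi_c((Z,R_{I,\eps})(\R))$ by \cite{MCP}. So it suffices to prove the combinatorial identity
$$\chi_c(A(\R))=\sum_{i=0}^r\frac{1}{2^{r+i}}\sum_{\sharp I=i}\sum_{\eps\in\{\pm1\}^i}\Big(\prod_{j\in I}\eps_j\Big)\chi_c\big((Z,R_{I,\eps})(\R)\big).$$

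The natural way to establish this identity is by induction on $r$, mirroring the inductive definition of $\chi$ rather than expanding Proposition \ref{prop-alg}; this is cleaner. For $r=0$ the statement is exactly \cite{MCP}. For the inductive step, write $A=\{B,R>0\}$ with $B$ having $r-1$ inequalities, and use the defining relation
$$\chi([B,R>0])=\tfrac{1}{4}\big(\chi([B,Y^2=R])-\chi([B,Y^2=-R])\big)+\tfrac{1}{2}\chi([B,R\neq0]).$$
Applying $\beta(\cdot)(-1)$ and the induction hypothesis to each of the three formulas on the right (each involving $r-1$ inequalities, in $n$ or $n+1$ variables), I must check that the resulting quantity equals $\chi_c(\{x\in B(\R):R(x)>0\})$. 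The key geometric input is the partition of $B(\R)$ into $\{R>0\}\sqcup\{R<0\}\sqcup\{R=0\}$, which by additivity of $\chi_c$ gives $\chi_c(\{R\neq0\})=\chi_c(\{R>0\})+\chi_c(\{R<0\})$; and the fibration $\{B,Y^2=\pm R\}(\R)\to B(\R)\cap\{\pm R>0\}$, which is a trivial double cover away from $\{R=0\}$, together with the section $Y=0$ over $B(\R)\cap\{R=0\}$, giving $\chi_c(\{B,Y^2=\pm R\}(\R))=2\chi_c(\{B,\pm R>0\})+\chi_c(\{B,R=0\})$ by additivity and multiplicativity of $\chi_c$ (note $\chi_c(\{\text{two points}\})=2$). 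Substituting these two relations into the right-hand side yields, after the elementary cancellation $\chi_c(\{R=0\})$ against itself and $\chi_c(\{R<0\})$ against $\chi_c(\{R\neq0\})$, exactly $\chi_c(\{B,R>0\})=\chi_c(A(\R))$.

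The main obstacle is making the fibration argument for $\chi_c$ fully rigorous: one must justify that $\chi_c$ of the total space of a semialgebraic double cover equals twice $\chi_c$ of the base, and more precisely handle the degeneration over $\{R=0\}$ correctly. This follows from additivity of $\chi_c$ (stratify the source over $\{R>0\}$, $\{R<0\}$, $\{R=0\}$) together with its multiplicativity on products $\{\pm R>0\}\times\{\text{2 points}\}$ and the fact that $\{Y^2=\pm R,\ \mp R>0\}(\R)=\varnothing$ so contributes $0$; all of these are standard properties of $\chi_c$ recalled in the introduction (see \cite{Coste}). Once this lemma is in place, the induction is a short computation, and since $\beta(A)$ has $\Z$-valued evaluation equal to $\chi_c(A(\R))$, one also recovers the remark that $\beta(A)(-1)\in\Z$.
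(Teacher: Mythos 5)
Your proof is correct and follows essentially the same route as the paper's: reduce to the known fact for real algebraic varieties via the inductive definition of $\chi$, and verify that the defining formula $\chi([B,R>0])=\tfrac14(\chi([B,Y^2=R])-\chi([B,Y^2=-R]))+\tfrac12\chi([B,R\neq0])$ holds with $\chi$ replaced by $\chi_c$ of the sets of real points. The paper's proof is terse at exactly this point — it merely asserts the compatibility and refers the reader to Remark \ref{rmk-ori} for the "geometric reason" — whereas you supply the two identities $\chi_c(\{B,Y^2=\pm R\}(\R))=2\chi_c(\{B,\pm R>0\})+\chi_c(\{B,R=0\})$ and $\chi_c(\{B,R\neq0\})=\chi_c(\{B,R>0\})+\chi_c(\{B,R<0\})$ and carry out the cancellation explicitly. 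So you have filled in the detail the paper leaves implicit; there is no substantive divergence in method.
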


\begin{proof} We recall that in Proposition \ref{prop-alg} we explain how to express the class of $A$ as a 
linear combination 
of classes of real algebraic varieties for which the virtual Poincar\'e polynomial evaluated
at $u=-1$ coincides with the Euler characteristic with compact
supports. At each step of our inductive process to obtain such a linear combination, we introduce a new
variable and a double covering of the set of points satisfying one less inequality. The inductive formula

$$\chi([B, R>0]):=\frac{1}{4}\big(\chi([B, Y^2=R])-\chi([B,
Y^2=-R])\big)+\frac{1}{2} \chi([B, R\neq 0]),$$
used at this step to eliminate one inequality by replacing the system $\{B,R>0\}$ by other systems $\{B, Y^2=R\}, \{B, Y^2=-R\}, 
\{B, R\neq 0\}$ is compatible with the Euler characteristic of the underlying sets of points, that is to say that 
our induction formula is true for $\chi=\chi_c$. The geometric reason for this fact is explained in Remark \ref{rmk-ori}, and is 
the intuitive motivation to define the realization $\chi$ by induction precisely as it is defined.

\end{proof}

\subsection{Homogeneous case}
We propose some computations of the virtual Poin\-ca\-r\'e polynomial of
basic real semialgebraic formulas of the form $\{R>0\}$ where $R$ is
homogeneous. Looking at Euler characteristic with compact supports, it
is equal to the product of the Euler characteristics with compact
supports of $\{X>0\}$ with
$\{R=1\}$. We investigate the case of virtual Poincar\'e polynomial. A
key point in the proofs will be the invariance of the virtual Poincar\'e polynomial
of constructible sets under regular homeomorphisms (see \cite{MCP2}, Proposition 4.3).

\begin{prop}\label{homo-odd} Let $R\in \R[X_1,\ldots,X_n]$ be a homogeneous polynomial
  of degre $d$. Assume $d$ is odd. Then
$$\beta([R>0])=\beta([X>0])\beta([R=1]).$$
\end{prop}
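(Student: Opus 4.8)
The plan is to reduce everything, via Proposition \ref{prop-alg}, to a statement about virtual Poincaré polynomials of genuine real algebraic varieties, and then to exploit the fact that for homogeneous $R$ the algebraic lifts $\{Y^2 = \pm R(X)\}$ and $\{R(X) \neq 0\}$ all carry a free $\GM$-type scaling structure compatible with the stratification $\{R>0\}$, $\{R=1\}$, $\{X>0\}$. Concretely, for $R$ homogeneous of odd degree $d$, the map $(X) \mapsto (t\cdot X)$ for $t>0$ acts on $\R^n \setminus \{R=0\}$, and since $R(tX)=t^d R(X)$ with $d$ odd, the sign of $R$ is preserved and the quotient of $\{R \neq 0\}$ by positive scaling is regularly homeomorphic to a disjoint union of pieces of $\{R=\pm 1\}$; the point is that $\{R \neq 0\}$ is regularly homeomorphic to $\{X>0\} \times \{R = 1\} \sqcup \{X>0\}\times\{R=-1\}$, and by the invariance of the virtual Poincaré polynomial of constructible sets under regular homeomorphisms (\cite{MCP2}, Proposition 4.3) one gets $\beta([R\neq 0]) = \beta([X>0])(\beta([R=1]) + \beta([R=-1]))$. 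But $d$ odd gives $\{R=-1\} \cong \{R=1\}$ via $X \mapsto -X$, so $\beta([R\neq 0]) = 2\beta([X>0])\beta([R=1])$.

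Next I would treat the quadric lifts. The variety $\{Y^2 = R(X)\}$ in $\R^{n+1}$ is invariant under the scaling $(X,Y) \mapsto (t^2 X, t^d Y)$ for $t > 0$ (using $d$ odd so that $t^d$ makes sense as a real number for all $t>0$, indeed for all $t$), and this exhibits $\{Y^2 = R(X)\} \setminus \{R(X) = 0\}$ as regularly homeomorphic to $\{X>0\}$ times the fibre $\{Y^2 = R(X)\}$ over a "sphere-like" slice, which after identification is $\{Y^2 = 1\} \times \{R=1\} \sqcup \{Y^2=-1\}\times\{R=-1\}$ — but the second piece is empty over the reals, so in $K_0(\Var_\R)$ (where $\{Y^2=-1\}$ is $\Spec \C$) one must be slightly careful and work instead with the explicit description of the lift over $\{R>0\}$ and over $\{R<0\}$ separately, computing $\beta$ of each via the regular homeomorphism onto $\{X>0\}\times\{Y^2 = \pm 1, R = \pm 1\}$. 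The cleanest route: compute $\beta([R>0])$ and $\beta([R<0])$ directly. Since $d$ is odd, $X\mapsto -X$ gives $\{R>0\}\cong\{R<0\}$ as basic semialgebraic formulas, hence by Proposition \ref{iso} (or directly from the definition of $\chi$, as in point (1) of Remark \ref{rmks}) we have $\beta([R>0]) = \beta([R<0]) = \tfrac12 \beta([R\neq 0])$. Combining with the computation above yields $\beta([R>0]) = \beta([X>0])\beta([R=1])$, which is exactly the claim.

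The main obstacle I expect is making the "scaling gives a regular homeomorphism onto a product with $\{X>0\}$" step precise: one needs to produce an explicit semialgebraic, in fact regular (Nash-regular / rational) map realizing the homeomorphism $\{R \neq 0\} \xrightarrow{\sim} \{X>0\} \times \{R=\pm 1\}$, typically $X \mapsto (|R(X)|^{1/d}, X/|R(X)|^{1/d})$ — but $|R(X)|^{1/d}$ is not a polynomial, so one must verify that after clearing denominators this is genuinely a regular homeomorphism in the sense required by \cite{MCP2}, Proposition 4.3, or instead stratify $\{R=1\}$ and build the homeomorphism chart by chart. In fact, since $\beta$ is only needed up to regular homeomorphism and these are invariant, one can instead argue on the level of $K_0$: the easiest is to observe $\{R\neq 0\}$ deformation-retracts (regular-homeomorphically) onto the link, giving the multiplicative relation $\beta([R\neq0]) = \beta(\GM_{>0})\,\beta(\text{link})$ where $\GM_{>0} = \{X>0\}$. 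Once that single multiplicativity fact is in hand, the rest is the two short symmetry arguments above ($X\mapsto -X$ on the odd-degree $R$), and the proof closes.
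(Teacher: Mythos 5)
Your plan lands on essentially the paper's argument: the symmetry $X\mapsto -X$ (together with $d$ odd) gives $\beta([R>0])=\tfrac12\beta([R\neq 0])$ exactly as in Remark~\ref{rmks}(1), and a radial regular homeomorphism factors $\{R\neq 0\}$ as a product, so multiplicativity of $\beta$ finishes. You have correctly identified the two ingredients, and you also correctly anticipated where the technical delicacy lies.

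The one place you should be more careful is the step
$\beta([R\neq 0]) = \beta([X>0])\bigl(\beta([R=1])+\beta([R=-1])\bigr)$,
which you justify by applying the invariance of $\beta$ under regular homeomorphisms (\cite{MCP2}, Proposition 4.3) to a homeomorphism whose target is $\{X>0\}\times\{R=1\}\sqcup\{X>0\}\times\{R=-1\}$. That cited proposition is about Zariski-constructible sets, and $\{X>0\}$ is not constructible; so the invariance theorem cannot be invoked for a homeomorphism onto a product with $\{X>0\}$. Your proposed workaround at the end (``$\{R\neq 0\}$ retracts onto the link, giving $\beta([R\neq 0])=\beta(\GG_{m,>0})\,\beta(\text{link})$'') still passes through $\{X>0\}$ and so does not close this gap. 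The paper sidesteps this entirely: it uses the regular homeomorphism
$(\lambda,x)\mapsto\lambda x$ from $\R^*\times\{R=1\}$ to $\{R\neq 0\}$, with inverse $y\mapsto\bigl(R(y)^{1/d},\,y/R(y)^{1/d}\bigr)$, well-defined because $d$ is odd — both sides are constructible, so Proposition 4.3 of \cite{MCP2} applies and gives $\beta([R\neq 0])=\beta(\R^*)\,\beta([R=1])$. Only at the very end is $\tfrac12\beta(\R^*)$ identified with $\beta([X>0])$, and this identification comes from the \emph{direct computation} $\chi([X>0])=\tfrac12(\L-1)$ in Example~\ref{ex1}, not from any homeomorphism statement. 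Replacing your $\{X>0\}$-product with $\R^*\times\{R=1\}$ and then invoking the computed value of $\beta([X>0])$ at the last line makes your proof fully rigorous, and yields exactly the paper's argument.
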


\begin{proof} The algebraic varieties defined by $Y^2=R(X)$ and
  $Y^2=-R(X)$ are isomorphic since $R(-X)=-R(X)$, therefore
  $$\beta([R>0])=\frac{\beta([R\neq 0])}{2}.$$
The map $(\lambda,x)\mapsto \lambda x$ from $\mathbb R^* \times
\{R=1\}$ to $R\neq 0$ is a regular homeomorphism with inverse $y
\mapsto (R(y)^{1/d}, \frac{y}{R(y)^{1/d}})$ therefore 
$$\beta([R\neq 0])=\beta(\mathbb R^*)\beta([R=1]),$$
so that 
$$\beta([R>0])=\frac{\beta(\mathbb R^*)}{2}\beta([R=1])=\beta([X>0])\beta([R=1]).$$
\end{proof}

The result is no longer true when the degre is even. However, in the
particular case of the square of a homogeneous polynomial of odd
degre, the relation of Proposition \ref{homo-odd} remains valid.

\begin{prop} Let $P\in \R[X_1,\ldots,X_n]$ be a homogeneous polynomial
  of degre $k$. Assume $k$ is odd, and define $R\in
  \R[X_1,\ldots,X_n]$ by $R=P^2$. Then
$$\beta([R>0])=\beta([X>0])\beta([R=1]).$$
\end{prop}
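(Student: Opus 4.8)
The plan is to first prove the algebraic identity $\beta([R>0]) = \beta([P\neq 0])$, which in fact holds for $R = P^2$ with $P$ an \emph{arbitrary} polynomial, and then to deduce the proposition from Proposition \ref{homo-odd} applied to the odd-degree homogeneous polynomials $P$ and $-P$.

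For the identity, I would unfold the definition of $\chi$ on the one-inequality formula $\{R>0\}$ (taking the ambient Zariski-constructible set to be $\A^n$), obtaining
$$\chi([R>0]) = \frac{1}{4}\big([Y^2 = R] - [Y^2 = -R]\big) + \frac{1}{2}[R\neq 0] \in K_0(\Var_{\R})\otimes\Z[\frac{1}{2}],$$
and then apply $\beta$, using that on $K_0(\Var_{\R})$ the morphism $\beta$ is computed after passing to real points in $K_0(\R\Var)$. Since $R = P^2$: the real locus of $\{Y^2 = R\}$ is the union $\{Y = P\}\cup\{Y = -P\}$ of two graphs, each biregularly isomorphic to $\R^n$ and meeting along $\{Y = 0,\, P = 0\}\cong\{P = 0\}$, so additivity in $K_0(\R\Var)$ gives $\beta([Y^2 = R]) = 2u^n - \beta([P=0])$; the real locus of $\{Y^2 = -R\}$ is just $\{Y = 0,\, P = 0\}\cong\{P=0\}$, so $\beta([Y^2 = -R]) = \beta([P=0])$; and the real locus of $\{R\neq 0\}$ is $\R^n\setminus\{P=0\}$, so $\beta([R\neq 0]) = u^n - \beta([P=0])$. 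Substituting, both $\frac{1}{4}\big(\beta([Y^2=R])-\beta([Y^2=-R])\big)$ and $\frac{1}{2}\beta([R\neq 0])$ equal $\frac{1}{2}u^n - \frac{1}{2}\beta([P=0])$, hence $\beta([R>0]) = u^n - \beta([P=0]) = \beta([P\neq 0])$.

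To conclude I would use that $P$, and therefore also $-P$, is homogeneous of odd degree. By semialgebraic additivity $\beta([P\neq 0]) = \beta([P>0]) + \beta([-P>0])$, and Proposition \ref{homo-odd} yields $\beta([P>0]) = \beta([X>0])\beta([P=1])$ and $\beta([-P>0]) = \beta([X>0])\beta([-P=1])$; since $x\mapsto -x$ is a biregular isomorphism from $\{-P = 1\} = \{P = -1\}$ onto $\{P=1\}$ (because $P(-x) = -P(x)$), this gives $\beta([P\neq 0]) = 2\beta([X>0])\beta([P=1])$. On the other hand the real locus of $\{R = 1\}$ is the disjoint union $\{P=1\}\sqcup\{P=-1\}$ of two such isomorphic sets, so $\beta([R=1]) = 2\beta([P=1])$, and therefore $\beta([X>0])\beta([R=1]) = 2\beta([X>0])\beta([P=1]) = \beta([P\neq 0]) = \beta([R>0])$, which is the desired equality.

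The only genuinely delicate step is the evaluation of $\beta([Y^2 = -R])$: the variety $\{Y^2 + P^2 = 0\}$ has a rich complexification (two conjugate components over $\C$ meeting along a copy of $\{P=0\}_{\C}$), yet $\beta$ records only real points, and over $\R$ the equation forces $Y = 0$ and $P(X) = 0$; this is exactly the "collapsing real locus" phenomenon already emphasized in Remark \ref{rmks}, so one must be careful to first pass to $K_0(\R\Var)$ before applying $\beta$. Everything else is additivity in $K_0(\R\Var)$ together with the quoted Proposition \ref{homo-odd} and the value $\beta([X>0]) = \frac{1}{2}(u-1)$ coming from $\chi([X>0]) = \frac{1}{2}(\L-1)$ in Example \ref{ex1}.
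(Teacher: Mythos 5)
Your proof is correct, and it reproduces the paper's key intermediate identity $\beta([R>0])=\beta([P\neq 0])$ (your derivation passes to real points in $K_0(\R\Var)$, the paper factors $Y^2-R=(Y-P)(Y+P)$ directly in $K_0(\Var_\R)$; the content is the same since $\beta$ factors through real points). Where you genuinely diverge is the second half. The paper proves $\beta([P\neq 0])=\beta(\R^*)\beta([P=1])$ afresh, by exhibiting the regular homeomorphism $(\lambda,x)\mapsto\lambda x$ from $\R^*\times\{P=1\}$ to $\{P\neq 0\}$, and then relates $\beta([P=1])$ to $\beta([R=1])$ via the factorization $R-1=(P-1)(P+1)$. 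You instead split $\beta([P\neq 0])=\beta([P>0])+\beta([-P>0])$ by semialgebraic additivity and invoke Proposition \ref{homo-odd} twice (on $P$ and on $-P$), pairing $\{P=-1\}$ with $\{P=1\}$ via $x\mapsto -x$, and reading $\{R=1\}$ as the disjoint union $\{P=1\}\sqcup\{P=-1\}$. Your route is a little more modular: it reuses the already-established odd-degree result rather than re-running the cone/homeomorphism argument, at the modest cost of invoking Proposition \ref{homo-odd} for the auxiliary polynomial $-P$ as well. Both arguments ultimately rest on the same regular homeomorphism (hidden inside Proposition \ref{homo-odd} in your case) and the same decomposition of $\{R=1\}$ into two copies of $\{P=1\}$. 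Your explicit caveat about evaluating $\beta([Y^2=-R])$ only after passing to real points is well placed; the paper glosses over the fact that $\{Y^2+P^2=0\}$ is not isomorphic to $\{P=0\}$ as a variety over $\R$ but only has the same real locus.
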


\begin{proof} Note first that $\{Y^2-R\}$ can be factorized as
  $(Y-P)(Y+P)$ therefore the virtual Poincar\'e polynomial of
  $Y^2-R$ is equal to
$$\beta(Y-P=0 )+\beta(Y+P=0)-\beta(P=0).$$
However the algebraic varieties $Y-P=0$ and $Y+P=0$ are
isomorphic to a $n$-dimensional affine space, whereas $Y^2+R=0$
is isomorphic to $P=0$ since $R=P^2$ is positive, so
that the virtual Poincar\'e polynomial of $R>0$ is equal to
$$\frac{1}{4}(2\beta(\mathbb
R^n)-2\beta([P=0]))+\frac{1}{2}\beta([P\neq 0])=\beta([P\neq 0]).$$

To compute $\beta([P\neq 0]$, note that the map 
$(\lambda,x)\mapsto \lambda x$ from 
$\mathbb R^* \times \{P=1\}$ to $\{P\neq 0\}$ is a regular homeomorphism with inverse $y
\mapsto (R(y)^{1/k}, \frac{y}{R(y)^{1/k}})$ therefore 
$$\beta([P\neq 0])=\beta(\mathbb R^*)\beta([P=1]).$$
We achieve the proof by noticing that $R-1=(P-1)(P+1)$ so that
$\beta([P=1])=\frac{\beta([R=1])}{2}$ because the degree of the
homogeneous polynomial $P$ is odd.
Finally
 $$\beta([R>0])=\frac{\beta(\mathbb R^*)}{2}\beta([R=1])$$
and the proof is achieved.
\end{proof}

More generally, for a homogeneous polynomial $R$ of degre twice a odd number, we can 
express the
virtual Poincar\'e polynomial of $[R>0]$ in terms of that of
$[R=1]$, $[R=-1]$ and $[R\neq 0]$ as follows.

\begin{prop}\label{homo-even} Let $k\in \mathbb N$ be odd and put $d=2k$. 
Let $R\in \R[X_1,\ldots,X_n]$ be a homogeneous polynomial
  of degre $d$. Then
$$\beta([R>0])=\frac{1}{4}\beta(\mathbb R^*)(\beta([R=1])-\beta([R=-1]))+\frac{1}{2}\beta([R\neq 0]).$$
\end{prop}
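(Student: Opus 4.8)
The plan is to peel the identity back to the two algebraic liftings $\{Y^2=R\}$ and $\{Y^2=-R\}$ and to compute each one by an explicit rational parametrization, in the same spirit as the proof of Proposition \ref{homo-odd}. By the very definition of $\chi$, hence of $\beta$, one has
$$\beta([R>0])=\frac14\big(\beta([Y^2=R])-\beta([Y^2=-R])\big)+\frac12\beta([R\neq 0]),$$
so it suffices to prove that $\beta([Y^2=R])-\beta([Y^2=-R])=\beta(\mathbb R^*)\big(\beta([R=1])-\beta([R=-1])\big)$. First I would split off the zero locus of $R$: on $\{Y^2=R(X)\}$ the condition $R(X)=0$ forces $Y=0$ and carves out a copy of $\{R=0\}$, and likewise on $\{Y^2=-R(X)\}$; by additivity of $\beta$ these two contributions coincide and cancel in the difference, leaving only the task of showing $\beta([Y^2=R,\,R\neq 0])=\beta(\mathbb R^*)\beta([R=1])$ and $\beta([Y^2=-R,\,R\neq 0])=\beta(\mathbb R^*)\beta([R=-1])$.

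For the first of these I would use the map $\mathbb R^*\times\{R=1\}\to\{Y^2=R(X),\,R(X)\neq 0\}$ given by $(\mu,x)\mapsto(\mu x,\mu^k)$: it is well defined since $R(\mu x)=\mu^{2k}=(\mu^k)^2\neq 0$, and since $k$ is odd it is a bijection with inverse $(X,Y)\mapsto(Y^{1/k},Y^{-1/k}X)$ — indeed $Y^2=R(X)\neq 0$ forces $Y\neq 0$, and $R(Y^{-1/k}X)=Y^{-2}R(X)=1$. This is a regular homeomorphism of exactly the type used in Proposition \ref{homo-odd}, so by invariance of the virtual Poincar\'e polynomial under regular homeomorphisms (\cite{MCP2}, Proposition 4.3) together with its multiplicativity one gets $\beta([Y^2=R,\,R\neq 0])=\beta(\mathbb R^*)\beta([R=1])$. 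The very same map, restricted to $\mathbb R^*\times\{R=-1\}$, lands in $\{Y^2=-R(X),\,R(X)\neq 0\}$ (now $-R(\mu x)=\mu^{2k}$) and is again a regular homeomorphism, yielding $\beta([Y^2=-R,\,R\neq 0])=\beta(\mathbb R^*)\beta([R=-1])$. Subtracting these two identities and substituting into the first display then gives the claimed formula.

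The step demanding the most care is checking that $(\mu,x)\mapsto(\mu x,\mu^k)$ really is a homeomorphism onto the whole set $\{Y^2=\pm R(X),\,R(X)\neq 0\}$, and this is precisely where the hypothesis that $k$ is odd is used essentially: it is what makes $\mu\mapsto\mu^k$ a bijection of $\mathbb R^*$, so that the coordinate $Y=\mu^k$ sweeps out all nonzero reals and not merely the positive ones. (For $k$ even the image would be only the half $\{Y>0\}$, which is why the plain product formula of Proposition \ref{homo-odd} cannot persist in general for even degree; in the preceding proposition the special hypothesis $R=P^2$ plays the analogous corrective role.) I would also note that the degenerate cases — for instance $R$ identically zero, or $\{R=\pm 1\}=\emptyset$ — need no separate treatment, since then the sets involved are empty on both sides of each identity and everything reduces to $0=0$.
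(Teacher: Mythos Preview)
Your proof is correct and follows essentially the same strategy as the paper's: the paper isolates the identity $\beta([Y^2=R])=\beta([R=0])+\beta(\mathbb R^*)\beta([R=1])$ as a separate lemma, proving it by first passing from $\{Y^2=R\}$ to $\{Y^d=R\}$ via $(x,y)\mapsto(x,y^k)$ and then exploiting the homogeneity of $Y^d-R$ to parametrize by $\mathbb R^*\times(\{R=1\}\cap\{Y^d=R\})$. Your direct parametrization $(\mu,x)\mapsto(\mu x,\mu^k)$ from $\mathbb R^*\times\{R=1\}$ to $\{Y^2=R,\,R\neq 0\}$ effectively fuses these two steps into a single regular homeomorphism and is arguably cleaner --- in particular it sidesteps the bookkeeping of the extra $\{y^d=1\}=\{\pm 1\}$ factor that appears in the paper's intermediate slice.
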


\begin{ex} We cannot do better in general as illustrated by the
  following examples. For $R_1=X_1^2+X_2^2$ one obtain
$$\beta([R_1>0])=\frac{3}{2}\beta([X>0])\beta([R_1=1])$$
whereas for $R_2=X_1^2-X_2^2$ one has
$$\beta([R_2>0])=\beta([X>0])\beta([R_2=1]).$$
\end{ex}

The proof of Proposition \ref{homo-even} is a direct consequence of the next lemma.

\begin{lemma}  Let $k\in \mathbb N$ be odd and put $d=2k$. Let $R\in \R[X_1,\ldots,X_n]$ be a homogeneous polynomial
  of degre $d$. Then
$$\beta([Y^2=R])=\beta([R= 0])+\beta(\mathbb R^*)\beta([R=1]).$$
\end{lemma}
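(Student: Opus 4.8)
The plan is to split $\{Y^2=R\}$ according to whether $R$ vanishes, and to identify each piece, up to a homeomorphism that preserves the virtual Poincar\'e polynomial, with a product of elementary constructible sets; the scheme is parallel to the proof of Proposition \ref{homo-odd}, the even degree being responsible for the extra term $\beta([R=0])$.

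First I would decompose the Zariski constructible set $\{Y^2=R\}\subset\mathbb R^{n+1}$ as the disjoint union of the closed subset $\{Y^2=R,\,R=0\}$ and its open complement $\{Y^2=R,\,R\neq 0\}$, and use additivity of $\beta$. On $\{R=0\}$ the relation $Y^2=R$ becomes $Y^2=0$, i.e. $Y=0$, so $\{Y^2=R,\,R=0\}$ is the locus $\{(x,0):R(x)=0\}$, which projects algebraically isomorphically onto $\{R=0\}$; hence this piece contributes exactly $\beta([R=0])$.

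The substance of the proof is the piece $\{Y^2=R,\,R\neq 0\}$. Since $R=Y^2$ on $\{Y^2=R\}$, this set coincides with $\{Y^2=R,\,Y\neq 0\}$ and $R>0$ on it. I would introduce the map
$$\Phi\colon\mathbb R^*\times\{R=1\}\longrightarrow\{Y^2=R,\,R\neq 0\},\qquad \Phi(\lambda,u)=(\lambda u,\,\lambda^{k}),$$
which is well defined because $R$ is homogeneous of degree $d=2k$: one has $R(\lambda u)=\lambda^{2k}R(u)=\lambda^{2k}=(\lambda^{k})^{2}\neq 0$. Using that $k$ is odd — so that $t\mapsto t^{k}$ is a bijection of $\mathbb R$ and real $k$-th roots are single valued — I would check that $\Phi$ is a bijection with inverse $(x,y)\mapsto\bigl(y^{1/k},\,x\,y^{-1/k}\bigr)$, both maps being continuous and semialgebraic, so that $\Phi$ is a regular homeomorphism of the same type as the ones used in Proposition \ref{homo-odd}. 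Then, by the invariance of the virtual Poincar\'e polynomial under regular homeomorphisms (\cite{MCP2}, Proposition 4.3) and its multiplicativity, $\beta([Y^2=R,\,R\neq 0])=\beta(\mathbb R^*\times\{R=1\})=\beta(\mathbb R^*)\beta([R=1])$. Adding the two contributions gives the claimed identity.

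I expect the last step to be the delicate one: the parametrization $\Phi$ is not a morphism of varieties, since its inverse involves a $k$-th root, so one genuinely relies on the fact — already invoked in Proposition \ref{homo-odd} — that such root-type homeomorphisms leave $\beta$ unchanged. The hypothesis that $k$ is odd, equivalently that $d$ is twice an odd integer, is exactly what makes $\Phi$ single valued and bijective; this is where that hypothesis enters, and for an even degree not of this form the scheme collapses, consistently with the example following Proposition \ref{homo-even}.
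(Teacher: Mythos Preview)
Your proof is correct and follows essentially the same route as the paper. The only cosmetic difference is that the paper factors the argument through the auxiliary variety $\{Y^d=R\}$, first using the regular homeomorphism $(x,y)\mapsto(x,y^k)$ between $\{Y^d=R\}$ and $\{Y^2=R\}$ and then applying the scaling $(\lambda,x,y)\mapsto(\lambda x,\lambda y)$ coming from the homogeneity of $Y^d-R$; your map $\Phi(\lambda,u)=(\lambda u,\lambda^k)$ is precisely the composite of these two steps, so the underlying idea and the appeal to \cite{MCP2}, Proposition~4.3 are identical.
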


\begin{proof} Note first that the algebraic varieties $Y^2=R$ and
  $Y^d=R$ have the same virtual Poincar\'e polynomial. Indeed the
  map $(x,y)\mapsto (x,y^k)$ realizes a regular homeomorphism between
  $Y^2=R$ and
  $Y^d=R$, whose inverse is given by $(x,y)\mapsto (x,y^{1/k})$. 
However the polynomial $Y^d-R$ being homogeneous, we obtain a regular homeomorphism
$$\mathbb R^* \times (\{R=1\}\cap \{Y^d=R\}) \longrightarrow \{R\neq 0\}\cap \{Y^d=R\}$$
defined by $(\lambda,x,y) \mapsto (\lambda x,\lambda y)$. As a consequence
$$\beta([Y^d-R=0])=\beta([R= 0])+\beta(\mathbb R^*)\beta([R=1]).$$
\end{proof}

\section{Zeta functions and Motivic real Milnor fibres}
 
We apply in this section the preceding construction of 
$\chi : K_0(BSA_{\R})\to K_0(\Var_\R)\otimes 
\Z[\frac{1}{2}]$ in defining, for a given polynomial $f\in \R[X_1,\cdots, X_d]$, 
zeta functions whose coefficients are classes 
 in
$(K_0(\Var_\R)\otimes \Z[\frac{1}{2}])[\L^{-1}]$
of real semialgebraic formulas in truncated arc spaces. 
We then show that these
zeta functions are deeply related to the topology of some 
corresponding set-theoretic  real semialgebraic Milnor fibres of~$f$.
\subsection{Semialgebraic zeta functions and real Denef-Loeser formulas.}
Let $f:\R^d \to \R$ be a polynomial function with coefficients in $\R$ sending $0$
to $0$.
We denote by  ${\cal L}$ or $\cal L(\R^d,0)$ the space of formal arcs
$\gamma(t)=(\gamma_1(t), \cdots, \gamma_d(t))$ on $\R^d$, with $\gamma_j(0)=0$ for all 
$j\in \{1, \cdots, d\}$, by $\cal L_n$ or $\cal L_n(\R^d,0)$ the space of 
truncated arcs ${\cal L}/(t^{n+1})$ and by $\pi_n : \cal L \to \cal L_n$
the truncation map. More generally, for $M$ a variety and $W$ a closed subset of $M$, 
$\cal L(M,W)$ (resp. $\cal L_n(M,W)$) will denote the 
space of arcs on $M$ (resp. the $n$-th jet-space on $M$) 
with endpoints in $W$. 

Let $\epsilon$ be one of the symbols in the set 
$\{ \hbox{\sl naive}, -1, 1, >, < \}$. 
For such a symbol $\epsilon$, via the 
realization of $K_0(BSA_{\R})$ in 
$K_0(\Var_\R)\otimes \Z[\frac{1}{2}]$, we 
define a zeta function $Z_f^\epsilon(T)\in (K_0(\Var_\R)\otimes 
\Z[\frac{1}{2}])[\L^{-1}][[T]]$ by
$$ Z_f^\epsilon(T):=\sum_{n\ge 1}\ [X_{n,f}^\epsilon]\L^{-nd}T^n,$$
 where $X_{n,f}^\epsilon$ is defined in the following way:
 \vskip0,3cm 
 - $X_{n,f}^{naive}=\{\gamma \in {\cal L}_n; 
\ f(\gamma(t))=at^n+\cdots, a\not=0\}$, 
  \vskip0,1cm 
 - $X_{n,f}^{-1}=\{\gamma \in {\cal L}_n; 
\ f(\gamma(t))=at^n+\cdots, a=-1\}$,
   \vskip0,1cm
 - $X_{n,f}^{1}=\{\gamma \in {\cal L}_n; 
\ f(\gamma(t))=at^n+\cdots, a=1\}$,
   \vskip0,1cm
 - $X_{n,f}^>=\{\gamma \in {\cal L}_n; 
\ f(\gamma(t))=at^n+\cdots, a >0\}$,
  \vskip0,1cm
- $X_{n,f}^<=\{\gamma \in {\cal L}_n; 
\ f(\gamma(t))=at^n+\cdots, a<0\}$.
\vskip3mm
Note that $X_{n,f}^\epsilon$ is a real algebraic variety for $\epsilon = -1$ or $1$, 
a real algebraic constructible set for $\epsilon =naive$ 
and a semialgebraic set, given by an explicit description involving one inequality, for $\epsilon$ being the symbol $ >$ or the symbol $<$. 
Consequently, 
$Z_f^\epsilon (T)\in  K_0(\Var_\R)[\L^{-1}][[T]]$ for  $\epsilon \in \{naive, -1, 1\}$  and 
 $Z_f^\epsilon (T)\in (K_0(\Var_\R)\otimes 
\Z[\frac{1}{2}])[\L^{-1}][[T]]$ for  $\epsilon \in \{ > , <\}$.

We show in this section that $Z_f^\epsilon(T)$ is a rational function expressed in terms 
of the combinatorial data of a resolution of $f$. To define those data let us consider 
$\sigma : (M,\sigma^{-1}(0))\to 
(\R^d,0)$  a proper birational map which is an isomorphism over the complement
 of $\{f=0\}$ in $(\R^d,0)$, such that $f\circ \sigma$ and  the jacobian determinant
$\jac\ \sigma$ are normal crossings and $\sigma^{-1}(0)$ is a union of components of the exceptional divisor. We denote by 
$ E_j$, for $ j\in \cal J$, the irreducible components of 
$(f\circ \sigma)^{-1}(0)$ and
assume that $E_k$ are the irreducible components of 
$\sigma^{-1}(0)$ for $k\in \cal K \subset \cal J$. For $j\in \cal J$ 
we denote by $N_j$ the multiplicity 
 $mult_{E_j}f\circ \sigma $ of 
$f\circ \sigma$ along $E_j$ and for $k\in \cal K$ by $\nu_k$ the number $\nu_k=
1+mult_{E_k}\jac \ \sigma$. For any $I\subset \cal J$, we put 
$E^0_I=(\bigcap_{i\in I} E_i)\setminus (\bigcup_{j\in \cal J\setminus I}E_j)$.
These sets $E^0_I$ are constructible sets and the collection $(E_I^0)_{I\subset \cal J}$ gives a canonical stratification of the divisor
$f\circ \sigma=0$, compatible with $\sigma=0$ such that in some affine open subvariety $U$ 
 in $M$ we have $f\circ \sigma (x)=u(x) \prod_{i\in I} x_i^{N_i}$, where
$u$ is a unit, that is to say a rational function which does not vanish on $U$, and
$x=(x',(x_i)_{i\in I})$ are local coordinates.

Finally for $\epsilon \in \{-1, 1, >,<\}$ and $I\subset \cal J$, 
we define $\widetilde E^{0,\epsilon}_I$ as the gluing along $E^0_I$ of the sets  
$$ R_U^\epsilon=\{ (x,t)\in (E^0_I\cap U)\times \R; \ t^m \cdot u(x)\ ?_\epsilon \ \},$$
where $?_\epsilon$ is $=-1$, $=1$, $>0$ or $<0$ in case $\epsilon$ is $-1,1, >$ or 
$< $ and $m=gcd_{i\in I}(N_i)$.

\begin{remark}\label{gluing} The definition of the $R_U^\epsilon$'s is 
independent of the choice of the coordinates, as well as 
the gluing of the $R^\epsilon_U$ is allowed, up to isomorphism, since when in some Zariski neighborhood of  
$E^0_I$ one has in another coordinate system $z=z(x)=(z', (z_i)_{i\in I})$ the expression $f\circ \sigma (z)= v(z)\prod_{i\in I}
z^{N_i}$,  there exist non-vanishing functions $\alpha_i$
so that $z_i=\alpha_i(z)\cdot x_i$. We thus obtain  
$v(z)\prod_{i\in I}\alpha_i^{N_i}(z)=u(x)$, and the transformation

$$ \begin{matrix}  
     & \{(x,t)\in (E_I^0\cap U)\times \R; t^m \cdot u(x)\ ?_{\epsilon}  \} & \to & 
\{(z,s)\in (E_I^0\cap U)\times \R; s^m\cdot v(z) \ ?_{\epsilon}  \} \\
   & \hfill (x,t) & \mapsto & (z,s=t\prod_{i\in I}\alpha_i(z)^{N_i/m}) \hfill \\ 
   \end{matrix} $$
   is an isomorphism in case $?_\epsilon$ is $=1$ or $=-1$, and induces an isomorphism between the associate double covers 
$\cal R^\epsilon_U =\{(x,t,y)\in (E^0_I\cap U)\times \R\times \R; t^m\cdot u(x)\cdot y^2= \eta(\epsilon) \} $ and 
$\cal R^{'\epsilon}_U=\{(z,s,w)\in (E_I^0\cap U)\times \R\times \R; s^m\cdot v(z)\cdot w^2=\eta(\epsilon)\} $, with $\eta(\epsilon)=1$ when $\epsilon$ is the symbol $>$
and $\eta(\epsilon)=-1$ when $\epsilon$ is the symbol $<$, the induced isomorphism simply being  
$$  \begin{matrix}  
     & \hfill \cal R^\epsilon_U & \to & 
\cal R^{'\epsilon}_U \hfill\\
   & \hfill (x,t,y) & \mapsto & (z,s, w=y ). \hfill \\ 
   \end{matrix}$$
Also notice that $\widetilde E_I^{0,\epsilon}$ is a constructible set when
   $\epsilon$ is $-1$ or $1$ and a semialgebraic set with explicit description 
over the constructible set $E_I^0$ when $\epsilon$ is $<$ or $>$. 
   \end{remark}

We can thus
define the class $[\widetilde E_I^{0,\epsilon}]\in \chi(K_0(BSA_{\R}))$ as follows. Choosing a finite covering $(U_l)_{l\in L}$ of $M$ by affine open subvarieties $U_l$, for $l\in L$, we set
$$[\widetilde E_I^{0,\epsilon}]=\sum _{S\subset L} (-1)^{|S|+1}[R^\epsilon_{\cap_{s\in S}U_{s}}].$$
The class $[\widetilde E_I^{0,\epsilon}]$ does not depend on the choice of the covering thanks to Remark \ref{gluing} and the algebraic additivity in $K_0(BSA_{\R})$.

With this notation one can give the expression of $Z_f^\epsilon(T)$ in terms of 
$[\widetilde E^{0,\epsilon}_I]$, as, 
for instance, in \cite{DL1}, \cite{DL2}, \cite{DL4},  \cite{Loo}, essentially 
using the Kontsevitch change of variables formula in motivic integration (\cite{Kon}, 
\cite{DL2} for instance).

\begin{theorem}\label{Zeta function} 
With the notation above, one has  
$$Z_f^\epsilon(T)= \sum_{I\cap \cal K \not= \emptyset}(\L-1)^{\vert I\vert -1 }[\widetilde E^{0,\epsilon}_I ]\prod_{i\in I}
\frac{\L^{-\nu_i}T^{N_i}}{1-\L^{-\nu_i}T^{N_i}}$$
for $\epsilon$ being $-1,1,>$ or $<$.
\end{theorem}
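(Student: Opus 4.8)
\emph{Plan.} The strategy is the one of Denef--Loeser for rationality of motivic zeta functions (\cite{DL1,DL2,DL4,Loo}), via the Kontsevich change of variables formula in motivic integration (\cite{Kon,DL2}); the novelty is that the sign conditions defining $X_{n,f}^\epsilon$ are kept at the level of basic semialgebraic formulas and only afterwards realized through the morphism $\chi$ of Theorem \ref{thm-prin}. Since $\sigma\colon(M,\sigma^{-1}(0))\to(\R^d,0)$ is proper and birational, and an isomorphism outside $\{f=0\}$, every arc $\gamma\in\mathcal L(\R^d,0)$ with $\ord_t f(\gamma(t))<\infty$ lifts uniquely to an arc $\tilde\gamma\in\mathcal L(M,\sigma^{-1}(0))$ with $\sigma\circ\tilde\gamma=\gamma$, whereas arcs inside $\{f=0\}$ do not meet any $X_{n,f}^\epsilon$. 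Pushing the series $\sum_n[X_{n,f}^\epsilon]\L^{-nd}T^n$ to $M$ and using that $f\circ\sigma$ and $\jac\,\sigma$ are normal crossings, I would partition the lifted arcs $\tilde\gamma$ according to the set $I\subset\mathcal J$ of components $E_i$ through $\tilde\gamma(0)$ and, for $i\in I$, to the contact orders $k_i=\ord_t\tilde\gamma^*(E_i)\ge1$; since $\tilde\gamma(0)\in\sigma^{-1}(0)=\bigcup_{k\in\mathcal K}E_k$, only $I$ with $I\cap\mathcal K\neq\emptyset$ occur, which explains the indexing in the statement.

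Fixing such data $(I,(k_i)_{i\in I})$, I would work in an affine chart $U$ with local coordinates $x=(x',(x_i)_{i\in I})$ in which $f\circ\sigma(x)=u(x)\prod_{i\in I}x_i^{N_i}$ with $u$ a unit and $\jac\,\sigma(x)$ a unit times $\prod_{i\in I}x_i^{\nu_i-1}$. Then $\ord_t f(\gamma(t))=\sum_{i\in I}k_iN_i$, so this piece contributes precisely when $n=\sum_{i\in I}k_iN_i$, and the leading coefficient of $f(\gamma(t))$ is $u(\tilde\gamma(0))\prod_{i\in I}c_i^{N_i}$, where $c_i\in\R^*$ is the leading coefficient of $\tilde\gamma^*(x_i)$. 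The data parametrizing the piece are the base point $\tilde\gamma(0)\in E_I^0$, the leading coefficients $(c_i)\in(\R^*)^I$, the higher $x_i$-jet coefficients (an explicit power of $\L$ depending on the $k_i$), and the $x'$-jets (another power of $\L$), subject only to the sign constraint $u(\tilde\gamma(0))\prod_{i\in I}c_i^{N_i}\ ?_\epsilon$. Incorporating the Jacobian weight $\prod_{i\in I}\L^{-\nu_ik_i}$ from the change of variables and summing the geometric series over all $(k_i)_{i\in I}$ with $k_i\ge1$, and then over $n$, produces the factor $\prod_{i\in I}\dfrac{\L^{-\nu_i}T^{N_i}}{1-\L^{-\nu_i}T^{N_i}}$.

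It then remains to identify, in $\chi(K_0(BSA_{\R}))$, the class of the leading-coefficient locus $L_I^\epsilon=\{(y,(c_i))\in E_I^0\times(\R^*)^I:u(y)\prod_{i\in I}c_i^{N_i}\ ?_\epsilon\}$ with $(\L-1)^{|I|-1}[\widetilde E_I^{0,\epsilon}]$. With $m=\gcd_{i\in I}N_i$, the monomial map $(c_i)_{i\in I}\mapsto\prod_{i\in I}c_i^{N_i}$ factors as $(c_i)\mapsto\prod_{i\in I}c_i^{N_i/m}\mapsto(\cdot)^m$, and the first arrow is a trivial $(\R^*)^{|I|-1}$-bundle onto $\R^*$; hence fixing the value $t$ of $\prod_{i\in I}c_i^{N_i/m}$ contributes a factor $(\L-1)^{|I|-1}$ and replaces $\prod_{i\in I}c_i^{N_i}$ by $t^m$, turning $L_I^\epsilon$ chart by chart into $(\L-1)^{|I|-1}$ times $R_U^\epsilon=\{(y,t)\in(E_I^0\cap U)\times\R:t^m u(y)\ ?_\epsilon\}$. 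The inclusion--exclusion definition $[\widetilde E_I^{0,\epsilon}]=\sum_{S\subset L}(-1)^{|S|+1}[R^\epsilon_{\cap_{s\in S}U_s}]$ assembles these into a class independent of the affine covering, by the coordinate-change isomorphism of Remark \ref{gluing} and algebraic additivity in $K_0(BSA_{\R})$. For $\epsilon\in\{-1,1\}$ the $R_U^\epsilon$ are Zariski-constructible, no denominator $2$ appears, and one recovers the usual real Denef--Loeser identity; for $\epsilon\in\{>,<\}$ the $R_U^\epsilon$ carry one inequality, which is exactly why the statement is naturally formulated in $K_0(BSA_{\R})$ and then realized by $\chi$ (using Proposition \ref{prop-alg}) in $(K_0(\Var_\R)\otimes\Z[\frac{1}{2}])[\L^{-1}][[T]]$.

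I expect the main obstacle to be this last step: one must carry out the sign bookkeeping for $\prod_{i\in I}c_i^{N_i}$ against $m=\gcd_{i\in I}N_i$ exactly — the parity of $m$ decides whether the image of the monomial map is $\R_{>0}$ or all of $\R^*$, hence whether $t^m u(y)\ ?_\epsilon$ sees one sign or both — while simultaneously keeping precise control of every power of $\L$ and of every $(\L-1)$-factor through the Kontsevich change of variables, so that the chart-by-chart identifications glue, via the inclusion--exclusion of Remark \ref{gluing}, into the stated global identity with no residual error term.
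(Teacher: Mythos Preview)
Your outline is correct and follows essentially the same route as the paper: lift arcs through $\sigma$ by the Kontsevich change of variables, stratify by $I$ and the contact orders $(k_i)_{i\in I}$, read off the leading coefficient as $u(\tilde\gamma(0))\prod_{i\in I}c_i^{N_i}$, sum the resulting geometric series to get $\prod_{i\in I}\dfrac{\L^{-\nu_i}T^{N_i}}{1-\L^{-\nu_i}T^{N_i}}$, and then identify the leading-coefficient locus with $(\L-1)^{|I|-1}[\widetilde E_I^{0,\epsilon}]$ via the factorization through $t=\prod_i c_i^{N_i/m}$.

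The one point where the paper is more explicit than your sketch is the mechanism that justifies, for $\epsilon\in\{>,<\}$, the equality of classes after your ``trivial $(\R^*)^{|I|-1}$-bundle'' identification. An algebraic bijection between the \emph{real points} of two basic semialgebraic formulas does not, by itself, force equality in $K_0(BSA_\R)$ (cf.\ Remark \ref{rmk-iso}). The paper writes down the explicit algebraic isomorphism $W_U^\epsilon\to W_U^{'\epsilon}$, $(c_i)\mapsto\bigl(t=\prod_\ell c_\ell^{N_\ell/m},\,(t^{-n_i}c_i)_i\bigr)$ with $\sum n_iN_i=m$, and then checks that this isomorphism \emph{lifts to an isomorphism of the associated double covers} $\{y^2\cdot u\cdot\prod c_i^{N_i}=\eta(\epsilon)\}$ and $\{w^2\cdot u\cdot t^m=\eta(\epsilon)\}$; by the very definition of $\chi$ (equivalently, by Proposition \ref{iso}), this is what gives $\chi([W_U^\epsilon])=\chi([W_U^{'\epsilon}])$. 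The same double-cover check is repeated for the passage from $W_U^{'\epsilon}$ to $R_U^\epsilon\times(\R^*)^{|I|-1}$. You correctly flag this as ``the main obstacle''; the concrete device that resolves it is precisely this lifting to double covers, and your write-up should make that step explicit rather than leaving it as sign bookkeeping.
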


\begin{remark} Classically, the right hand side of equality 
of Theorem 
\ref{Zeta function} does not depend, as a formal series
in $(K_0(\Var_\R)\otimes \Z[\frac{1}{2}])[\L^{-1}][[T]]$, on the 
choice of the resolution $\sigma$, 
as the definition of $Z_f^\epsilon(T)$ does not depend itself on 
any choice of resolution.
\end{remark}
   
To prove this theorem, we first start with a lemma that needs the following notation. We denote by 
$$\sigma_*: \cal L(M,\sigma^{-1}(0))\to \cal L(\R^d,0),$$ and for $n\in \N$, by 
$$\sigma_{n,*} : \cal L_n(M,\sigma^{-1}(0))\to \cal L_n(\R^d,0)$$ 
the natural mappings induced by $\sigma : (M,\sigma^{-1}(0)) \to (\R^d,0)$. Let 
$$Y_{n,f}^\epsilon=\pi_n^{-1}(X_{n,f}^\epsilon).$$
Then $Y_{n,f\circ \sigma}^\epsilon= \{\gamma\in \cal L(M,\sigma^{-1}(0)); \ 
f(\sigma(\pi_n(\gamma)))(t)=at^n+\cdots, \ a \ ?_\epsilon \}$, where $?_\epsilon$ is $=-1$, $=1$, $>0$ or $<0$ in case $\epsilon$ is $-1,1, >$ or 
$< $, and note also that $Y_{n,f\circ \sigma}^\epsilon = \sigma_*^{-1}(Y_{n,f}^\epsilon)$.
Finally for $e\ge 1$, let 
$$\Delta_e=\{ \gamma\in \cal L (M,\sigma^{-1}(0)); \ mult_t \ (\jac \ \sigma)(\gamma(t))=e \} 
\hbox{ and } Y_{e,n,f\circ \sigma}^\epsilon=Y_{n,f\circ \sigma}^\epsilon\cap \Delta_e.$$

\begin{lem}\label{lemme calculatoire} 
With the notation above, there exists $c\in \N$ such that 
$$ Z_f^\epsilon(T)
= \displaystyle \L^d\sum_{n\ge 1} T^n
 \sum_{e\le cn} \L^{-e}\sum_{I\not=\emptyset}\L^{-(n+1)d} [{\cal L}_n(M,E_I^0\cap \sigma^{-1}(0)) \cap \pi_n(\Delta_e) 
\cap X_{n,f\circ \sigma}^\epsilon].$$ 
\end{lem}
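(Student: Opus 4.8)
The plan is to transport the sum defining $Z_f^\epsilon(T)$ to the resolution $M$ along $\sigma$, apply the Kontsevich change of variables formula (in the form of \cite{Kon}, \cite{DL2}) to convert the pushforward of arcs into a sum weighted by powers of $\L$ governed by $\ord_t\jac\sigma$, and finally stratify the arc space of $M$ by the stratum $E_I^0$ containing the origin of the arc. First I would observe that if $\gamma\in\cal L(\R^d,0)$ belongs to $X_{n,f}^\epsilon$ then $\ord_t(f\circ\gamma)=n<\infty$, so $\gamma$ is not contained in $\{f=0\}$; since $\sigma$ is proper and an isomorphism over $\R^d\setminus\{f=0\}$, the valuative criterion of properness furnishes a unique lift $\widetilde\gamma\in\cal L(M,\sigma^{-1}(0))$ with $\sigma\circ\widetilde\gamma=\gamma$. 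Hence $\sigma_*$ restricts, for every $n$ and every $\epsilon$, to a bijection $Y_{n,f\circ\sigma}^\epsilon=\sigma_*^{-1}(Y_{n,f}^\epsilon)\to Y_{n,f}^\epsilon$, and in particular $\sigma_*$ is injective on that cylinder, which is exactly what makes the change of variables formula directly applicable. With the normalization of the motivic measure $\mu$ on arc spaces for which $\mu(\pi_n^{-1}(S))=[S]\L^{-(n+1)d}$ for a constructible $S\subset\cal L_n$, the cylinder $Y_{n,f}^\epsilon=\pi_n^{-1}(X_{n,f}^\epsilon)$ is stable at level $n$, so $[X_{n,f}^\epsilon]\L^{-nd}=\L^d\,\mu(Y_{n,f}^\epsilon)=\L^d\,\mu(\sigma_*(Y_{n,f\circ\sigma}^\epsilon))$; this accounts for the overall factor $\L^d$ in the statement.

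Next I would apply the change of variables formula: for a cylinder $A\subset\cal L(M,\sigma^{-1}(0))$ on which $\ord_t\jac\sigma$ is bounded, $\mu(\sigma_*(A))=\sum_{e\ge1}\L^{-e}\,\mu(A\cap\Delta_e)$. Taking $A=Y_{n,f\circ\sigma}^\epsilon$ and recalling $Y_{e,n,f\circ\sigma}^\epsilon=Y_{n,f\circ\sigma}^\epsilon\cap\Delta_e$ gives $\mu(Y_{n,f}^\epsilon)=\sum_{e}\L^{-e}\,\mu(Y_{e,n,f\circ\sigma}^\epsilon)$. The sum is finite: if a lifted arc has origin on $E_I^0$ and meets $E_i$ ($i\in I$) with order $k_i\ge1$, then in normal crossing coordinates $f\circ\sigma=u\prod_{i\in I}x_i^{N_i}$ with $u$ a unit, while $\jac\sigma$ equals $\prod_{i\in I}x_i^{\nu_i-1}$ up to a unit, so $n=\sum_{i\in I}k_iN_i$ and $e=\sum_{i\in I}k_i(\nu_i-1)$; since all $N_i\ge1$ this gives $e\le c\,n$ with $c=\max_i(\nu_i-1)$ (or simply $c=\max_i\nu_i$), the constant of the statement, which depends only on $\sigma$.

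Then I would partition $\cal L(M,\sigma^{-1}(0))$ according to the stratum $E_I^0$ containing $\gamma(0)$, writing $\mu(Y_{e,n,f\circ\sigma}^\epsilon)=\sum_{I}\mu\big(Y_{e,n,f\circ\sigma}^\epsilon\cap\cal L(M,E_I^0\cap\sigma^{-1}(0))\big)$. The term $I=\emptyset$ vanishes, since $E_\emptyset^0\cap\sigma^{-1}(0)=\emptyset$ (equivalently, near such an origin $f\circ\sigma$ is a unit, so its order cannot equal $n\ge1$), leaving only $I\neq\emptyset$. For $I\neq\emptyset$, the key point is that on the locus of arcs with origin on $E_I^0$ subject to $\ord_t(f\circ\sigma\circ\gamma)=n$, each order $k_i=\ord_t(x_i\circ\gamma)$ satisfies $k_iN_i\le n$, hence $k_i\le n$; therefore the condition $\ord_t(f\circ\sigma\circ\gamma)=n$ together with the prescribed sign or value of the coefficient of $t^n$, and the condition $\ord_t\jac\sigma(\gamma)=\sum_{i\in I}(\nu_i-1)k_i=e$, all depend only on $\pi_n(\gamma)$. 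Thus $Y_{e,n,f\circ\sigma}^\epsilon\cap\cal L(M,E_I^0\cap\sigma^{-1}(0))$ is a cylinder stable at level $n$, and its measure is $\L^{-(n+1)d}\,[\,\cal L_n(M,E_I^0\cap\sigma^{-1}(0))\cap\pi_n(\Delta_e)\cap X_{n,f\circ\sigma}^\epsilon\,]$. Substituting into the preceding identities and summing over $n\ge1$, $e\le cn$, and $I\neq\emptyset$ produces exactly the asserted formula.

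The one point requiring genuine care — and the main obstacle — is that for $\epsilon\in\{>,<\}$ none of the loci above is algebraic: each is Zariski-constructible data together with the single inequality $a>0$ (resp.\ $a<0$) on the coefficient $a$ of $t^n$ of $f\circ\gamma$ (resp.\ $f\circ\sigma\circ\gamma$), and its class lives in $\chi(K_0(BSA_{\R}))\subset K_0(\Var_{\R})\otimes\Z[\frac{1}{2}]$. I would resolve this by noting that $a$ is a regular function on the relevant jet space, pulled back consistently along $\sigma_{n,*}$ (because $f\circ\gamma=f\circ\sigma\circ\widetilde\gamma$) and left unchanged by the truncation maps $\cal L_{n'}(\R^d,0)\to\cal L_n(\R^d,0)$ for $n'\ge n$, which are Zariski-locally trivial affine bundles; hence one may carry out the entire change-of-variables and cylinder-measure computation on the ambient Zariski-constructible sets obtained by deleting the inequality — where it is verbatim the Denef--Loeser computation over the field $\R$ — and reinstate the inequality only at the end, using that $\chi$ is a ring morphism compatible with algebraic and semialgebraic additivity. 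The residual work, taken care of by the elementary counting with the $k_i$ above, is the routine verification that all loci in play are cylinders stable at level $n$ and that the Jacobian order grows at most linearly in $n$.
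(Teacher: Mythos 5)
Your proposal is correct and follows essentially the same route as the paper: normalize $[Y_{n,f}^\epsilon]=\L^{-(n+1)d}[X_{n,f}^\epsilon]$ (whence the overall factor $\L^d$), apply the Kontsevich--Denef--Loeser change of variables along $\sigma$ to write $[Y_{n,f}^\epsilon]$ as a sum over Jacobian orders $e$ weighted by $\L^{-e}$, bound $e$ linearly in $n$ via the normal-crossing expressions $n=\sum_{i\in I}k_iN_i$ and $e=\sum_{i\in I}k_i(\nu_i-1)$, and finally stratify by the stratum $E_I^0$ containing the origin of the lifted arc. Your constant $c=\max_i(\nu_i-1)$ differs from the paper's $\max_i(\nu_i-1)/N_i$ but works for the same reason and is even safer (the paper's choice need not be an integer); you also make explicit two points the paper leaves implicit, namely the bijectivity of $\sigma_*$ on the relevant cylinders via the valuative criterion of properness, and the justification that the semialgebraic loci for $\epsilon\in\{>,<\}$ can be handled by deleting and then reinstating the sign condition on the leading coefficient, which is a genuine and welcome clarification but does not change the structure of the argument.
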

\begin{proof}
As usual in motivic integration, the class of the cylinder $Y_{n,f}^\epsilon=\pi_n^{-1}(X_{n,f}^\epsilon)$, $n\ge 1$, is 
an element of $(K_0(\Var_\R)\otimes \Z[{1\over 2}])[\L^{-1}]$, 
the localization of the ring $K_0(\Var_\R)\otimes \Z[{1\over 2}]$ with respect to 
the multiplicative set generated by $\L$, and defined
by $[Y_{n,f}^\epsilon]:=\L^{-(n+1)d}[X_{n,f}^\epsilon]$, since the truncation morphisms $\pi_{k+1,k}:\cal L_{k+1}(\R^d,0)\to \cal L_k(\R^d,0)$, $k\ge 1$, are locally trivial 
fibrations with fibre $\R^d$. Hence $Z_f^\epsilon(T)= \displaystyle \L^d\sum_{n\ge 1} [Y_{n,f}^\epsilon]T^n$.

Take now $\gamma \in \sigma_*^{-1}(Y_{n,f}^\epsilon)$, 
and let $I\subset \cal J$ such that $\gamma(0)\in E_I^0$. In some neighbourhood of 
$E_I^0$, one has coordinates such that $f\circ \sigma (x)=u(x)\prod_{i\in I}x_i^{N_i}$ and $\jac(\sigma)(x)=v(x)\prod_{i\in I}x_i^{\nu_i-1}$, with $u$ and
$v$ units.
If one denotes $\gamma=(\gamma_1, \cdots, \gamma_d)$ in these coordinates, with $k_i$ the multiplicity of $\gamma_i$ at $0$ for $i\in I$, 
then we have 
$mult_t(f\circ \sigma\circ \gamma(t))=\sum_{i\in I} k_i N_i=n$. Now 
$$mult_t(\jac \sigma)(\gamma(t))=\sum_{i\in I}k_i(\nu_i-1)\le \max_{i\in I} (\frac{\nu_i-1}{N_i})\sum_{i\in I}N_ik_i
=\max_{i\in I} (\frac{\nu_i-1}{N_i})n.$$  
Therefore if one sets $c= \max_{i\in I} (\frac{\nu_i-1}{N_i})$, one has 
$$Y_{n,f\circ \sigma}^\epsilon=\bigcup_{e\ge 1}Y^\epsilon_{e,n,f\circ \sigma} =\bigcup_{1\le e\le cn} Y^\epsilon_{e,n,f\circ \sigma}, $$
as disjoint unions. Now we can apply the change of variables theorem (see \cite{DL2}, \cite{Kon}) to compute $[Y^\epsilon_{n,f}]$ in terms of 
$[Y^\epsilon_{e,n,f\circ \sigma}]$: 
$$ [Y^\epsilon_{n,f}]= \sum_{e\le cn} \L^{-e}[Y^\epsilon_{e,n,f\circ \sigma}], $$
and summing over the subsets $I$ of $\cal J$, as $Y^\epsilon_{e,n,f\circ \sigma}$ is the disjoint union
 
$$\bigcup_{I\not =\emptyset} Y^\epsilon_{e,n,f\circ \sigma}
 \cap \pi_0^{-1}(E_I^0\cap \sigma^{-1}(0)),$$
we obtain  
$$Z_f^\epsilon(T)= \displaystyle \L^d\sum_{n\ge 1} [Y_{n,f}^\epsilon]T^n=\displaystyle \L^d\sum_{n\ge 1} T^n
 \sum_{e\le cn} \L^{-e}\sum_{I\not=\emptyset}
 [Y^\epsilon_{e,n,f\circ \sigma} \cap \pi_0^{-1}
(E_I^0\cap \sigma^{-1}(0))]$$
$$=\displaystyle \L^d\sum_{n\ge 1} T^n
 \sum_{e\le cn} \L^{-e}\sum_{I\not=\emptyset} \L^{-(n+1)d}
[\pi_n(Y^\epsilon_{e,n,f\circ \sigma} \cap \pi_0^{-1}
(E_I^0\cap \sigma^{-1}(0)))]= $$
$$=\displaystyle \L^d\sum_{n\ge 1} T^n
 \sum_{e\le cn} \L^{-e}\sum_{I\not=\emptyset}\L^{-(n+1)d} [{\cal L}_n(M,E_I^0\cap \sigma^{-1}(0)) \cap \pi_n(\Delta_e) 
\cap X_{n,f\circ \sigma}^\epsilon].$$
\end{proof}
\vskip5mm
\begin{proof}[Proof of Theorem \ref{Zeta function} ]   
Considering the expression of $Z_f^\epsilon(T)$ given by Lemma \ref{lemme calculatoire},
we have to compute the class of $[{\cal L}_n(M,E_I^0\cap \sigma^{-1}(0)) \cap \pi_n(\Delta_e) 
\cap X_{n,f\circ \sigma}^\epsilon]$. For this we notice that on 
some neighbourhood $U$ of the end point 
$\gamma(0)\in E_I^0\cap \sigma^{-1}(0)$, 
one has coordinates such that 
$$ f\circ \sigma (x)=u(x)\prod_{i\in I}x_i^{N_i}   \hbox{ and } 
 \jac(\sigma)(x)=v(x)\prod_{i\in I}x_i^{\nu_i-1},$$
with $u$ and $v$ units.
As a consequence ${\cal L}_n(M,E_I^0\cap U\cap \sigma^{-1}(0)) \cap \pi_n(\Delta_e) \cap X_{n,f\circ \sigma}^\epsilon$ is isomorphic to 
$$\{\gamma\in {\cal L}_n(M,\sigma^{-1}(0)); \gamma(0)\in E_I^0\cap U\cap \sigma^{-1}(0), \sum_{i\in I}N_i k_i=n, \sum_{i\in I}
k_i(\nu_i-1)=e,$$
$$ f\circ \sigma (\gamma(t))=at^n+\cdots, a \ ?_\epsilon \},$$
where $?_\epsilon$ is $=-1$, $=1$, $>0$ or $<0$ in case $\epsilon$ is $-1,1, >$ or 
$< $ and $k_i$ is the multiplicity of $\gamma_i$ for $i\in I$.
Now denoting by $A(I,n,e)$ the set 
$$A(I,n,e):=\{k=(k_1, \cdots,k_d)\in \N^d ; \sum_{i\in I}N_i k_i=n, \sum_{i\in I} k_i(\nu_i-1)=e \},$$ 
 and identifying  for simplicity
$x$ and $((x_i)_{i\not\in I},(x_i)_{i\in I})$,
the set 
$${\cal L}_n(M,E_I^0\cap U\cap \sigma^{-1}(0)) \cap \pi_n(\Delta_e) \cap X_{n,f\circ \sigma}^\epsilon$$
 is isomorphic to the product
$$(\R^n)^{d-\vert I\vert}\times \bigcup_{k\in A(I,n,e)} \{ x\in (E_I^0\cap U \cap \sigma^{-1}(0))\times (\R^*)^{\vert I\vert }; 
u((x_i)_{i\not\in I},0) \prod_{i\in I} x_i^{N_i}\ ?_\epsilon\} 
\times \prod_{i\in I}(\R^{n-k_i})$$
Indeed, denoting $\gamma=(\gamma_1, \dots, \gamma_d)$ by
$\gamma_i(t)=a_{i,0}+\cdots + a_{i,n}t^n$ for $i\not \in I$
and $\gamma_i(t)=a_{i,k_i}t^{k_i}+\cdots + a_{i,n} t^n$ for $i\in I$, an arc of $\cal L_n(M,E_I^0\cap U \cap \sigma^{-1}(0))$, the first 
factor of the product comes from the free choice of the coefficients $a_{i,j}$, $i\not\in I$, $j=1, \cdots, n$, the last factor of the product
comes from the free choice of the coefficients $a_{i,j}$, $i\in I$, $j= k_i+1, \dots, n $
and the middle factor of the product comes from the choice of the coefficients $a_{i,0}\in E_I^0\cap U \cap \sigma^{-1}(0)$, $i\not\in I$ and from 
the choice of the coefficients $a_{i,k_i}$, $i\in I$, subject to $f\circ \sigma (\gamma(t))=u(\gamma(t))\prod_{i\in I}\gamma^{N_i}_i(t)=
u((a_{i,0})_{i\not\in I},0) (\prod_{i\in I}a_{i,k_i}^{N_i})t^n+\cdots=at^n+\cdots, a\ ?_\epsilon$.
\vskip2mm
We now choose $n_i\in \Z$ such that $\sum_{i\in I}n_iN_i=m=gcd_{i\in I}(N_i)$ and consider the two semialgebraic sets
$$W_U^\epsilon = \{ x\in (E_I^0\cap U \cap \sigma^{-1}(0))\times (\R^*)^{\vert I\vert }; 
u((x_i)_{i\not\in I},0) \prod_{i\in I} x_i^{N_i}\ ?_\epsilon\}$$
and 
$$W_U^{'\epsilon}= \{ (x',t)\in (E_I^0\cap U \cap \sigma^{-1}(0))\times (\R^*)^{\vert I\vert }\times \R^*; u((x'_i)_{i\not\in I},0)t^m\ ?_\epsilon, \
\prod_{i\in I}x_i'^{N_i/m}=1 \}, $$
where $?_\epsilon$ is $=-1$, $=1$, $>0$ or $<0$ in case $\epsilon$ is $-1,1, >$ or 
$< $.
In case $?_\epsilon=1$ or $?_\epsilon=-1$, the mapping 
$$ \begin{matrix} &W^{'\epsilon}_U &\to &W^\epsilon_U \hfill  \\
                  &(x',t) &\mapsto &x=((x'_i)_{i\not\in I}, (t^{n_i}x'_i)_{i\in I} )\\
\end{matrix}$$
is an isomorphism of inverse 
$$ \begin{matrix} &W_U^\epsilon&\to  &W_U^{'\epsilon}\hfill  \\
                  &x &\mapsto &(x'=((x_i)_{i\not\in I}, 
((\prod_{\ell\in I} x_\ell^{N_\ell/m})^{-n_i}x_i)_{i\in I}), 
t=\prod_{\ell\in I} x_\ell^{N_\ell/m} ). \\
\end{matrix}$$
 In the semialgebraic case, this isomorphism induces a natural  isomorphism
on the double-covers $\cal W_U^\epsilon$ and $\cal W_U^{'\epsilon}$
associated to $ W_U^\epsilon$ and $W_U^{'\epsilon}$ and defined by  
$$\cal W_U^\epsilon=\{(x,y)\in (E_I^0\cap U \cap \sigma^{-1}(0))\times
 (\R^*)^{\vert I\vert }\times \R; 
 y^2u((x'_i)_{i\not\in I},0)\prod_{i\in I} x_i^{N_i}
=\eta(\epsilon) \}$$ and 
$$\cal W_U^{'\epsilon}=\{ (x,t,w)\in (E_I^0\cap U \cap \sigma^{-1}(0))\times (\R^*)^{\vert I\vert }\times \R^*\times \R;$$
$$w^2u((x'_i)_{i\not\in I},0)t^m =\eta(\epsilon), \
\prod_{i\in I}x_i'^{N_i/m}=1\},$$
where $\eta(\epsilon)=1$ when $\epsilon$ is the symbol $>$ and
$\eta(\epsilon)=-1$ when $\epsilon$ is the symbol $<$. In consequence, $[W_U^\epsilon]=[W_U^{'\epsilon}]$ in the algebraic
case ($\epsilon=-1$ or $1$) as well as in the semialgebraic case ($\epsilon=<$ or $>$) considering our realization formula for basic semialgebraic
formulas in 
$ K_0(\Var_\R)\otimes \Z[\frac{1}{2}]$.
Now we observe in the case where $\epsilon$ is $-1$ or $1$ that $W_U^{'\epsilon}$ is isomorphic to 
$R^\epsilon_U \times (\R^*)^{\vert I\vert -1}$ (see \cite{DL4}, Lemma 2.5) whereas in the case where 
$\epsilon$ is $<$ or $>$, we obtain that the class of $W_U^{'\epsilon}$ is equal to the class of 
$R^\epsilon_U \times (\R^*)^{\vert I\vert -1}$, considering again the double coverings associated to the
basic semialgebraic formulas defining these two sets. 

We finally obtain  
$$[{\cal L}_n(M,E_I^0\cap \sigma^{-1}(0)) \cap \pi_n(\Delta_e) 
\cap X_{n,f\circ \sigma}^\epsilon]=\displaystyle\sum_{k\in A(I,n,e)}
\L^{nd-\sum_{i\in I}k_i}[W_U^{'\epsilon}]=$$
$$\displaystyle\sum_{k\in A(I,n,e)}
 \L^{nd-\sum_{i\in I}k_i}\times[R_U^{\epsilon}]\times 
(\L-1)^{\vert I\vert -1}.$$
Summing over the charts $U$, the expression of $Z_f^\epsilon(T)$ given by Lemma \ref{lemme calculatoire} is now 
$$ Z_f^\epsilon(T)=\displaystyle \sum_{I\cap \cal K \not= \emptyset} \L^d\sum_{n\ge 1} T^n
 \sum_{e\le cn} \L^{-e}(\L-1)^{\vert I \vert-1 }\L^{-(n+1)d} 
[{\widetilde E}_I^{0,\epsilon}]\displaystyle\sum_{k\in A(I,n,e)}  \L^{nd-\sum_{i\in I}k_i}$$
$$=\displaystyle \sum_{I\cap \cal K \not= \emptyset} (\L-1)^{\vert I \vert-1 }
[{\widetilde E}_I^{0,\epsilon} ]
\sum_{n\ge 1} T^n
\sum_{e\le cn}\sum_{k\in A(I,n,e)}  \L^{-e-\sum_{i\in I}k_i}$$
 Noticing that the $(k_i)_{i\in I}$'s such that $k=((k_i)_{i\not\in I}),
(k_i)_{i\in I})\in \displaystyle \bigcup_{e\le cn, n\ge 1} A(I,n,e)$ are
in bijection with $\N^{*\vert I \vert }$, we have 
$$Z_f^\epsilon(T)=
\displaystyle \sum_{I\cap \cal K\not= \emptyset} (\L-1)^{\vert I \vert-1 }
[{\widetilde E}_I^{0,\epsilon}]
\sum_{(k_i)_{i\in I}\in \N^{\vert I \vert }}  
\prod_{i\in I}(\L^{-\nu_i}T^{N_i})^{k_i}$$
$$= \displaystyle \sum_{I\cap \cal K\not= \emptyset} (\L-1)^{\vert I \vert-1 }
[{\widetilde E}_I^{0,\epsilon}]
  \prod_{i\in I}\frac{\L^{-\nu_i}T^{N_i}}{1-\L^{-\nu_i}T^{N_i}}. $$
\end{proof}

\subsection{Motivic real Milnor fibres and their realizations.}
We can now define a motivic real Milnor fibre by taking the constant term
of the rational function $Z_f^\epsilon(T)$ viewed as a power series in $T^{-1}$. 
This process formally consists in letting $T$ going to $\infty$ in the rational 
expression of  $Z_f^\epsilon(T)$ given by Theorem \ref{Zeta function} and using 
the usual computation rules as in the convergent case (see for instance \cite{DL1}, \cite{DL4}).  

\begin{definition}\label{real D-L} 
Let $f:\R^d\to \R$ be a polynomial function and $\epsilon$ be one of the symbols $naive, 1, -1, >$ or $<$. 
Consider a resolution of $f$ as above and let us adopt the same notation $(E_I^0)_I$ for the stratification of the exceptional 
divisor of this resolution, leading to the notation $\widetilde E_I^{0,\epsilon}$. 
The real motivic 
Milnor $\epsilon$-fibre $S^\epsilon_f$ of $f$ is defined as (see \cite{DL4} for the complex case) 
$$ S^\epsilon_f:=-\lim_{T\to \infty} Z^\epsilon_f(T):=\displaystyle
-\sum_{I\cap \cal K\not= \emptyset} (-1)^{\vert I \vert}
[{\widetilde E}_I^{0,\epsilon}](\L-1)^{\vert I \vert-1 }
\in K_0(\Var_\R)\otimes \Z[\frac{1}{2}].$$
It does not depend on the choice of the resolution $\sigma$.
\end{definition}
For $\epsilon$ being the symbol $1$ for instance, we have $S_f^1\in K_0(\Var_\R)$.
We can consider, first in the complex case, the realization 
of $S_f^1$ via the Euler-Poincar\'e characteristic ring morphism $\chi_c : K_0(\Var_\C)\to \Z$. Note that in the complex case, the Euler characteristics with and without compact supports are equal. For $f:\C^d\to \C$, since $\chi_c(\L-1)=0$, we obtain 

$$\chi_c(S_f^1)=\displaystyle\sum_{\vert I\vert=1, I\subset \cal K} 
\chi_c({\widetilde E}_I^{0,1}) =\displaystyle\sum_{\vert I\vert=1, I\subset \cal K}  
N_I\cdot \chi_c(E_I^0\cap \sigma^{-1}(0)).$$  
Now denoting by
$F$ the set-theoretic Milnor fibre of the fibration $f_{\vert B(0,\alpha)\cap f^{-1}(D^\times_\eta)}:   
B(0,\alpha)\cap f^{-1}(D^\times_\eta) 
\to D_\eta^\times $, with $B(0,\alpha)$ the open ball in $\C^d$ of radius $\alpha$ 
centred at 
$0$, $D_\eta$ the disc in $\C$ 
of radius $\eta$ centred at $0$ and $D_\eta^\times=D_\eta\setminus\{0\} $, with 
$0<\eta\ll \alpha\ll 1$, 
comparing the above expression $\chi_c(S_f^1)=\displaystyle
\sum_{\vert I\vert=1, I\subset \cal K} 
N_I\cdot \chi_c(E_I^0)$ with the following A'Campo formula of \cite{ACA}  
 for the first Lefschetz number of the iterates of the monodromy 
$M:H^*(F,\C)\to H^*(F,\C)$ of $f$, 
that is for the Euler-Poincar\'e characteristic of the fibre $F$:
 
$$ \chi_c(F)=\displaystyle\sum_{\vert I\vert=1, I\subset \cal K} N_I\cdot 
\chi_c(E_I^0\cap \sigma^{-1}(0)) $$
we   simply observe  that 
$$\chi_c(S_f^1)= \chi_c(F).$$
The closure $f^{-1}(c)\cap \bar B(0,\alpha)$, $0<\vert c\vert \ll \alpha \ll 1$, of the 
Milnor fibre $F$ being denoted by $\bar F$ and the boundary of $\bar F$
being the odd dimensional compact 
manifold $f^{-1}(c)\cap S(0,\alpha)$, 
$\chi_c(f^{-1}(c)\cap S(0,\alpha) )=0$ and we finally have
$$\chi_c(S_f^1)=\chi_c(F)=\chi_c(\bar F).$$

\begin{remark} There is {\sl a priori} no hint in the definition of $Z_f^\epsilon(T)$
that the opposite of the constant term $S_f^1$ of the power series in $T^{-1}$ induced by the 
rationality of $Z_f^\epsilon(T)$
could be the motivic version of the Milnor fibre of $f$ (as well
as, for instance, there is no evident hint that the expression of $Z_f^\epsilon$ in Theorem \ref{Zeta function}
does not depend on the resolution $\sigma$).  
As mentionned above, in the complex case, we just observe that the   
expression of $\chi_c(S_f^1)$ is the expression of $\chi_c(F)$ provided 
by the A'Campo formula. Exactly in the same way there is no 
{\sl a priori} reason for $\chi_c(S_f^\epsilon)$, regarding the definition of $Z_f^\epsilon$, to be so acurately related to 
the topology of $f^{-1}(\epsilon \vert c \vert)\cap B(0,\alpha)$.  Nevertheless we prove
that it is actually the case (Theorem \ref{Milnor}).
\end{remark}
In order to establish this result we start hereafter by a 
geometrical proof of the formula in the complex
case (compare with \cite{ACA} where only $\Lambda(M^0)$ is considered, $M^k$ being 
the $k$th iterate of the monodromy $M:H^*(F,\C)\to H^*(F,\C)$ of $f$). 
We will then extend to the reals this computational proof in
 the proof of Theorem \ref{Milnor}, allowing us   interpret the complex proof 
as the first complexity level of its real extension.

\begin{remark} Note that in the complex case a proof of the fact that $\Lambda(M^k)=\chi_c(X^1_{k,f})$, 
for $k\ge 1$, is given in \cite{HL}
without the help of resolution of singularities, that is to say without help of 
A'Campo's formulas (see Theorem 1.1.1 of \cite{HL}). As a direct corollary it is thus proved that  $\chi_c(S^1_f)=\chi_c(F)$
in the complex case, without using 
A'Campo formulas.
\end{remark}

\vskip5mm
\noindent
{\bf Realization of the complex motivic Milnor fibre under $\chi_c$. } 
The fibre $F=\{ f=c\}\cap B(0,\alpha)$ is homeomorphic to the fibre 
$\cal F=\{f\circ \sigma = c\}\cap \sigma^{-1}(B(0,\alpha))$, with $\sigma^{-1}(S(0,\alpha))$ viewed as the boundary of a 
tubular neighbourhood
of  $\sigma^{-1}(0)=\bigcup_{E_J^0\subset \sigma^{-1}(0)}E_J^0$, 
keeping the same notation $(E_J^0)_J$ as before for the natural stratification of the strict transform $\sigma^{-1}(\{f=0\})$ of  $f=0$. 
Now the formula may be established for 
$\cal F$ in some chart of $M\cap \sigma^{-1}(B(0,\alpha))$, by additivity.
In such a chart, where $f\circ \sigma$ is normal crossing, we consider 
\begin{enumerate}
\item[-] the set $E_J=\bigcap_{i\in J}E_i \subset \sigma^{-1}(0)$,
given by $x_i=0$, $ i\in J$, 
\item[-] a closed small enough tubular neighbourhood $V_J$ in $M$ of  
$\bigcup_{J\subset K, K\not= J} E^0_K$, that is 
a tubular neighbourhood of all the $E^0_K$'s bounding $E^0_J$, such that
$E_J^0\setminus V_J$ is homeomorphic to $E_J^0$,
\item[-] and $\pi_J$  the projection onto $E_J$ along the
$x_j$'s coordinates, for $j\in J$. 
\item[-] an open  neighbourhood $\cal E_J$ 
of $E_J^0\setminus V_J$ in $\sigma^{-1}(B(0,\alpha)) $ given by 
$\pi_J^{-1}(E_J^0\setminus V_J), \vert x_j\vert \le \eta_J$,  
$j\in J $, with $\eta_J>0$ small enough,

\end{enumerate}

\begin{remark}\label{Thom}
For $I=\{i \}$, we remark that $\cal F \cap \cal E_I$ is homeomorphic
to $N_i$ copies of $E_I^0\cap \cal E_I$, and thus to $N_i$ copies of 
$E_I^0$. Indeed, assuming $f\circ \sigma = u(x)x_i^{N_i}$
in $\cal E_I$, we observe that the family $(f_t)_{t\in [0,1]}$, with 
$f_t=u((x_j)_{j\not\in I}, t\cdot x_i)  x_i^{N_i}-c$, 
has homeomorphic fibres $\{f_t=0\}\cap \cal E_J$, 
$t\in [0,1]$, by Thom's isotopy lemma, since
$$\frac{\partial f_t}{\partial x_i}(x)=t\frac{\partial u}{\partial x_i}(x)
 x_i^{N_i}+
u(x) x_i^{N_i-1}=0, $$
would imply  $\displaystyle t\frac{\partial u}{\partial x_i}(x)x_i+
u(x)=0$. But the first term in this sum goes to $0$ as $x_i$ goes to $0$, since the 
derivatives of $u$ are
bounded on the compact $adh(\cal E_I)$, although the norm of the 
second term is bounded from below on $\cal E_I$ by a non zero constant, since $u$ is a unit. 
Finally, as
$\{ f_1=0\}\cap \cal E_I$ is homeomorphic to $\{ f_0=0\}\cap \cal E_I$ and 
$\{ f_0=0\}\cap \cal E_I$ 
is a $N_i$-graph 
over $E_I^0\cap \cal E_I$, $\cal F\cap \cal E_I$ is homeomorphic to $N_i$ copies of $E_I^0$.
\end{remark}

By this remark, $\cal F$ covers maximal dimensional stratum $E_I^0$,
$\vert I \vert=1$, $I \subset \cal K$, with $N_i$ copies of a leaf $\cal F_I$ of $\cal F$.
To be more accurate, with the notation introduced above, 
$\cal F_I$ covers the neighbourhood $E_I^0 \cap \cal E_I$ of $E_I^0\setminus V_I$.  
Moreover  the $\cal F_I$'s overlap in $\cal F$
over the open set $E_J^0 \cap \cal E_J$ of the strata $E^0_J$ that bound
the $E^0_I$'s,  for $\vert I \vert =1$, $\vert J\vert =2$ 
and $I \subset J$, in bundles over the $E_J^0 \cap \cal E_J$'s of fibre $\C^*$. 
Those sub-leaves 
$\cal F_J$ of $\cal F$ 
overlap in turn over the open $E_Q^0 \cap \cal E_Q$ of the strata $E_Q^0$, 
$\vert Q\vert=3, J\subset Q $,
that bound the $E_J^0$'s, in bundles over the $E_Q^0 \cap \cal E_Q$'s of fibres
$(\C^*)^2$ and so forth... 
For instance when $f\circ \sigma = u(x)\prod_{i\in I}x_i^{N_i}$ in  
$\cal E_I$, $I=\{i\}$, 
and 
$f\circ \sigma = v(x) x_i^{N_i}x_j^{N_j}$ in  $\cal E_J$, $J=\{i,j\}$, the $N_i$ leaves 
$\cal F_I$, homeomorphic to the $N_i$ copies $x_i^{N_i}=c /u(x)$ of $E_I^0$, overlap 
over $E_J^0 \cap \cal E_J$
in sub-leaves $\cal F_J$ of $\cal F_I$, given by $v(x)x_i^{N_i}x_j^{N_j}=c$, fibering over $E_J^0$ with 
fibre $GCD(\{N_i,N_j\})$ copies of $(\C^*)^{\vert J\vert -1}$ and so forth... (see figure 1).

\vskip1,0cm
\hskip1,5cm \includegraphics[height=8cm]{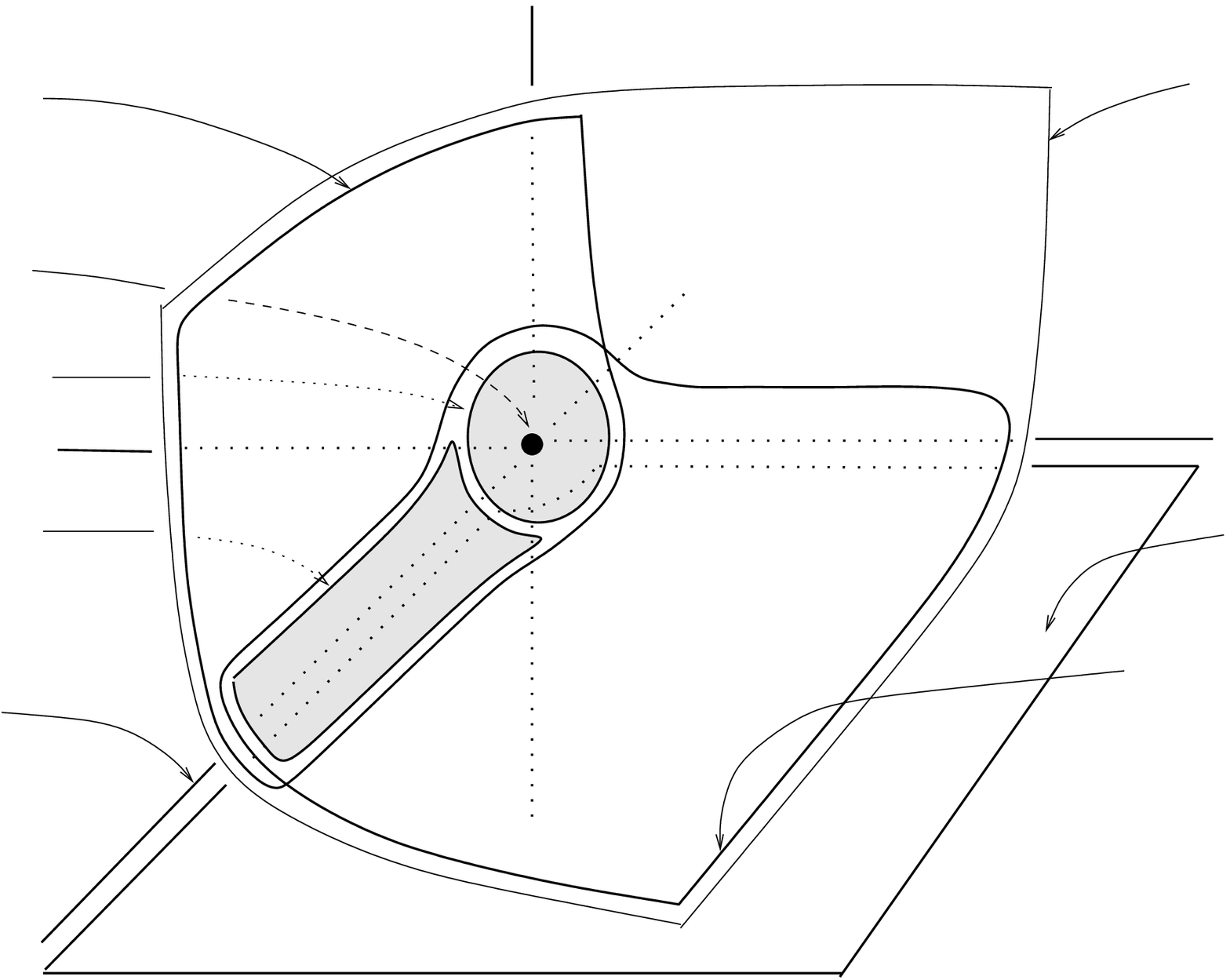}
\vskip-7,7cm	 
\hskip11,4cm$f\circ \sigma=c$
\vskip-0,4cm	 
\hskip1,0cm $\cal F_{I'}$
\vskip0,8cm	 
\hskip0,9cm $E^0_K$
\vskip0,5cm	 
\hskip1,0cm $\cal F_K$
\vskip0,8cm	 
\hskip1,1cm $\cal F_J$
\vskip-0,5cm	 
\hskip11,7cm $E^0_I$
\vskip0,7cm	 
\hskip10,9cm $\cal F_I$
\vskip-0,2cm	 
\hskip0,9cm $E^0_J $

\vskip3,0cm	
\centerline{figure 1}
\vskip1cm	 

\begin{remark}\label{retract}
Note that the topology of $\cal F=\{f\circ \sigma =c\} \cap \sigma^{-1}(B(0,\alpha))$ 
is the same as the topology of 
$\bigcup_{J\cap \cal K \not=0}\cal F_J$ (that is the topology of $\cal F$ above the strata 
$E^0_J$ of $\sigma^{-1}(0)$) since
the retraction of $\cal F$ onto $\bigcup_{J\cap \cal K \not=\emptyset} \cal F_J$, 
as $\alpha$ goes to $0$, induces
a homeomorphism from $\cal F$ to $\bigcup_{J\cap \cal K \not=\emptyset} \cal F_J$. 
\end{remark}

From Remark \ref{retract}, by additivity, it follows that the Euler-Poincar\'e characteristic of $\cal F$ (in our chart) is the sum 
$$\displaystyle\sum_{\vert I\vert=1, I\subset \cal K} 
N_I\cdot \chi_c(E_I^0\cap \sigma^{-1}(0)) + L,\eqno(*)$$ where $L$ is some $\Z$-linear 
combination of 
Euler-Poincar\'e characteristics of bundles
over the open sets $E_J\cap \cal E_J^0$, $\vert J \vert>1 $, of fibre a power of tori $\C^*$. Now the A'Campo formula 
$$ \chi_c(F)=\displaystyle\sum_{\vert I\vert=1, I\subset \cal K} 
N_I\cdot \chi_c(E_I^0\cap \sigma^{-1}(0))$$
for
the Milnor number  follows from the fact that $\chi_c(\C^*)=0$ implies $L=0$.

\vskip5mm
\noindent
{\bf Realization of the real motivic Milnor fibres under $\chi_c$. } 
The partial covering of $\cal F$ by the pieces $\cal F_J$, for 
$J\cap \cal K \not= \emptyset$, 
over the strata of the stratification $(E^0_J)_{J\cap \cal K \not= \emptyset}$ of 
  $\sigma^{-1}(0)$  allows us to compute the Euler-Poincar\'e characteristic 
of the Milnor fibre $\cal F$ in terms of the Euler-Poincar\'e characteristic of 
the strata $E_J^0$, in the complex as well as in the real case. In the complex case, as noted above, for $J$ with $\vert J\vert >1$,
one has $\chi_c(\cal F_J)=0$. This cancellation provides a quite simple formula for
 $\chi_c(F)$: only the strata of the maximal 
dimension   of the divisor $\sigma^{-1}(0)$ appear in this formula, as expected 
from the A'Campo formula. 

In the real case one does not have such cancellations: on one hand 
the expression of $\chi_c(F)$ 
in terms of $\chi_c(\widetilde E_J^{0,\epsilon})$ is no more trivial
(the remaining term $L$ of equation $(*)$ is not zero and consequently terms 
$\chi_c(\widetilde E_J^{0, \epsilon})$, for $\vert J\vert >1 $ and 
$E_j \cap \sigma^{-1}(0)\not=\emptyset$, appear), and on the other hand the 
expression
of $\chi_c(S_f^\epsilon)$ given by the real Denef-Loeser formula in Definition
\ref{real D-L} have terms $2^{\vert J \vert -1}\chi_c(\widetilde E_J^{0,\epsilon})$
, for $\vert J\vert >1 $ and $J\cap \cal K \not=\emptyset $
(since $\chi_c(\L-1) =-2$ in the real case).

Nevertheless, in the real case we show that
$\chi_c(S_f^\epsilon)$  is again $\chi_c(\bar F)$, justifying the terminology of  
motivic real semialgebraic Milnor fibre of $f$ at $0$ for $S_f^\epsilon$. The formula 
stated in Theorem \ref{Milnor} below 
is the real analogue of the A'Campo-Denef-Loeser formula for complex hypersurface 
singularities and thus appears as the extension to the 
reals of this complex formula, or, in other words, the complex formula is the notably 
first level 
of complexity of the more general real formula.

\begin{notation}\label{Khim}
Let $f: \R^d\to \R$ be a polynomial function such that $f(0)=0$ and with isolated 
singularity at $0$, that is $\grad f(x)=0$ only for $x=0$ in some open neighbourhood of 
$0$.
 Let $0<\eta \ll \alpha$ be such that the topological type   of 
$f^{-1}(c)\cap B(0,\alpha)$ does 
not depend on $c$ and $\alpha$, for $0<c<\eta$ or for $-\eta<c<0$.

- Let us denote, for $\epsilon\in \{-1,1\}$ and $\epsilon\cdot c>0$, this topological type by $F_\epsilon$, by $\bar F_\epsilon$ the topological type of the closure of the Milnor fibre $F_\epsilon$ and by 
$Lk(f)$ the link $f^{-1}(0)\cap S(0,\alpha)$ of $f$ at the origin. 
We recall that the topology of $Lk(f)$ is the same as the topology of the boundary 
$f^{-1}(c)\cap S(0,\alpha)$ of the Milnor fibre $\bar F_\epsilon$, when $f$ has an
isolated singularity at $0$.

- Let us denote, for $\epsilon\in \{<,>\}$, the topological type
of $f^{-1}(]0,c_\epsilon[)\cap B(0,\alpha)$ by $F_\epsilon$, and 
the topological type of $f^{-1}(]0,c_\epsilon[)\cap \bar B(0,\alpha)$
by $\bar F_\epsilon$,
where $c_<\in ]-\eta, 0[$ and $c_>\in ]0,\eta[$.

- Let us denote, for $\epsilon\in \{<,>\}$, the topological type
of $\{f \ \bar \epsilon \ 0\}\cap S(0,\alpha)$ by $G_\epsilon$, where 
$\bar \epsilon$ is $\le$ when  $\epsilon$ is $<$ and $\bar \epsilon$ is $\ge$
when  $\epsilon$ is $>$. 
\end{notation}

\begin{remark}\label{bord}
When $d$ is odd, $Lk(f)$ is a smooth odd-dimensional submanifold of $\R^d$ and 
consequently $\chi_c(Lk(f))=0$. For 
$\epsilon \in \{-1, 1, <, >\}$, we thus have in this situation,  
$\chi_c(F_\epsilon)=\chi_c(\bar F_\epsilon)$. This is the situation in the complex 
setting. When $d$ is even and for  $\epsilon \in \{-1, 1\}$ since
$\bar F_\epsilon$ is a compact manifold with boundary
$Lk(f)$, one knows that
$$\chi_c(\bar F_\epsilon)=-\chi_c (F_\epsilon)=\frac{1}{2}\chi_c(Lk(f)).$$
For general $d\in \N$ and for $\epsilon \in \{-1, 1, <, >\}$, we thus have 
$$ \chi_c(\bar F_\epsilon)=(-1)^{d+1}\chi_c (F_\epsilon).$$
On the other hand we recall that for $\epsilon \in \{ <, >\}$
$$ \chi_c(G_\epsilon )=\chi_c(\bar F_{\delta_\epsilon}),$$
where $\delta_>$ is $1$ and $\delta_<$ is $-1$ 
(see \cite{Ar}, \cite{Wa}).
\end{remark}

\begin{theorem}\label{Milnor} 
 With  notation \ref{Khim}, we have, for $\epsilon\in \{-1, 1, <, >\}$ 
$$ \chi_c(S_f^\epsilon)=\chi_c(\bar F_\epsilon)=(-1)^{d+1}\chi_c (F_\epsilon), $$
and for $\epsilon\in \{<, >\}$
$$ \chi_c(S_f^\epsilon)=-\chi_c(G_\epsilon).$$

\end{theorem}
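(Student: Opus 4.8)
The plan is to extend to the reals the geometric computation of $\chi_c(\mathcal{F})$ carried out above in the complex case, carefully tracking the extra strata that survive because $\chi_c(\R^*) = -2 \neq 0$. First I would fix an embedded resolution $\sigma$ as in Theorem \ref{Zeta function} and, exactly as in the complex argument, decompose a chart-local model of the relevant set-theoretic fibre according to the strata $E_J^0$ of $\sigma^{-1}(0)$. The key point to establish is a real analogue of Remark \ref{Thom}: over a maximal stratum $E_I^0$ with $|I|=1$, the piece of $\{f\circ\sigma \ ?_\epsilon\}$ lying above $E_I^0 \cap \mathcal{E}_I$ is, up to homeomorphism, controlled by the sign of the unit $u$ and the parity of $N_i$, and contributes $[\widetilde E_I^{0,\epsilon}]$-type terms; over a stratum $E_J^0$ with $|J|>1$ it fibres over $E_J^0 \cap \mathcal{E}_J$ with fibre a semialgebraic subset of $(\R^*)^{|J|-1}$ cut out by a product-sign condition, whose Euler characteristic with compact supports I must compute. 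Using Thom's isotopy lemma as in Remark \ref{Thom} to deform the unit $u$ to a constant, these real fibres are homeomorphic to standard pieces of the form $\{(\lambda_1,\dots,\lambda_{|J|-1})\in(\R^*)^{|J|-1} : \text{sign of }\prod \lambda_\ell^{N_\ell} = \pm 1\}$, and I would compute $\chi_c$ of these directly.

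The heart of the matter is then a bookkeeping identity. On one side, Remark \ref{retract} (whose proof goes through verbatim in the real case, the retraction onto $\bigcup_{J\cap\mathcal{K}\neq\emptyset}\mathcal{F}_J$ being semialgebraic) gives
$$\chi_c(F_\epsilon) = \sum_{\emptyset\neq I\cap\mathcal{K}} \chi_c(\mathcal{F}_I^\epsilon),$$
where $\mathcal{F}_I^\epsilon$ is the part of the fibre over $E_I^0$, and each $\chi_c(\mathcal{F}_I^\epsilon)$ is $\chi_c(\widetilde E_I^{0,\epsilon}\cap\sigma^{-1}(0))$ times the Euler characteristic of the toral fibre. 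On the other side, Definition \ref{real D-L} gives
$$S_f^\epsilon = -\sum_{I\cap\mathcal{K}\neq\emptyset}(-1)^{|I|}[\widetilde E_I^{0,\epsilon}](\L-1)^{|I|-1},$$
so, applying $\chi_c$ and using $\chi_c(\L-1)=-2$, I get $\chi_c(S_f^\epsilon) = -\sum_{I\cap\mathcal{K}\neq\emptyset}(-1)^{|I|}(-2)^{|I|-1}\chi_c(\widetilde E_I^{0,\epsilon})$. The task is to match these two sums term by term after regrouping: the contribution of a fixed stratum $E_J^0$ to $\chi_c(F_\epsilon)$, summed over all $I\subset J$ with $|I|=1$, $I\subset\mathcal{K}$, that have $E_J^0$ in their boundary configuration, must equal $-(-1)^{|J|}(-2)^{|J|-1}\chi_c(\widetilde E_J^{0,\epsilon})$. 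This is where I expect the main obstacle: one has to show that the Euler characteristic of the real toral piece $\{\mathrm{sign}\prod\lambda_\ell^{N_\ell}=\pm1\}\subset(\R^*)^{|J|-1}$, combined with the combinatorial multiplicities $N_i$ and the overlap structure of the $\mathcal{F}_I$'s, assembles precisely into the sign $(-2)^{|J|-1}$ coefficient appearing in the Denef–Loeser sum — i.e. a real refinement of the A'Campo cancellation, where instead of everything dying one gets exactly the right power of $2$.

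Having proved $\chi_c(S_f^\epsilon) = \chi_c(F_\epsilon)$ for $\epsilon \in \{-1,1\}$ and $\chi_c(S_f^\epsilon) = \chi_c(F_\epsilon)$ (the open-interval fibre) for $\epsilon\in\{<,>\}$ by the same chart computation — the only change being the sign condition $?_\epsilon$ on $u$ defining $R_U^\epsilon$ and hence $\widetilde E_I^{0,\epsilon}$ — I would then conclude by invoking the purely topological facts collected in Remark \ref{bord}. Namely, $\chi_c(\bar F_\epsilon) = (-1)^{d+1}\chi_c(F_\epsilon)$ because $\bar F_\epsilon$ is a compact manifold with boundary $Lk(f)$ of dimension $d-1$ (odd when $d$ even, whence $\chi_c(Lk(f))=0$ when $d$ odd, and $\chi_c(\bar F_\epsilon)=\tfrac12\chi_c(Lk(f))=-\chi_c(F_\epsilon)$ when $d$ even), and $\chi_c(G_\epsilon) = \chi_c(\bar F_{\delta_\epsilon})$ by the cited results of Arnol'd and Wall. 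Chaining $\chi_c(S_f^\epsilon)=\chi_c(F_\epsilon)$, $\chi_c(\bar F_\epsilon) = (-1)^{d+1}\chi_c(F_\epsilon)$, and $\chi_c(S_f^\epsilon) = (-1)^{d+1}\chi_c(F_\epsilon) = \chi_c(\bar F_\epsilon)$ — here I should double-check the parity normalization so that $S_f^\epsilon$ matches $\bar F_\epsilon$ rather than $F_\epsilon$, adjusting the chart computation accordingly — and finally $\chi_c(S_f^\epsilon) = \chi_c(\bar F_{\delta_\epsilon}) = \chi_c(G_\epsilon)$ for $\epsilon\in\{<,>\}$ gives both displayed equalities. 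The independence of $S_f^\epsilon$ from $\sigma$ is already recorded in Definition \ref{real D-L}, so no separate argument is needed there.
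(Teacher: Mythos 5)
Your proposal reproduces the broad strategy of the paper (chart-local decomposition of the fibre over the strata $E^0_J$, inclusion-exclusion over the overlapping leaves, matching against the Denef--Loeser sum), but it contains a genuine gap at the decisive step, and it is the gap you yourself flag when you write that you ``should double-check the parity normalization so that $S_f^\epsilon$ matches $\bar F_\epsilon$ rather than $F_\epsilon$.'' This is not a normalization issue that can be absorbed by ``adjusting the chart computation'': the discrepancy between $\chi_c(F_\epsilon)$ and $\chi_c(\bar F_\epsilon)$ is $\chi_c(Lk(f))$, which is nonzero in general (when $d$ is even), so the chain $\chi_c(S_f^\epsilon)=\chi_c(F_\epsilon)$ and $\chi_c(S_f^\epsilon)=\chi_c(\bar F_\epsilon)=(-1)^{d+1}\chi_c(F_\epsilon)$ that you propose is actually inconsistent.

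The source of the error is your use of Remark \ref{retract} to write $\chi_c(F_\epsilon)=\sum_{I\cap\cal K\neq\emptyset}\chi_c(\cal F^\epsilon_I)$. That remark is a statement about the \emph{complex} fibre, where it is harmless because the pieces over strata $J$ with $J\cap\cal K=\emptyset$ but $\bar E_J\cap\sigma^{-1}(0)\neq\emptyset$ have vanishing Euler characteristic (bundles with $(\C^*)$-power fibres). In the real setting those pieces do not vanish, and the retraction ``as $\alpha\to 0$'' is only a homotopy equivalence of non-compact sets, under which $\chi_c$ is not invariant. The paper's proof therefore does \emph{not} use Remark \ref{retract} in the real case. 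Instead, it sums the inclusion-exclusion over \emph{all} $J\in\bar{\cal K}$, obtaining $\chi_c(\cal F)=\sum_{J\in\bar{\cal K}}2^{|J|-1}\chi_c(\widetilde E^{0,\epsilon}_J)$, and then splits this into the sum over $J\cap\cal K\neq\emptyset$, which is $\chi_c(S_f^\epsilon)$ by Definition \ref{real D-L}, and the remaining boundary sum over $J\cap\cal K=\emptyset$, $J\in\bar{\cal K}$, which is identified with $\chi_c(Lk(f)\times{]0,\beta[})=-\chi_c(Lk(f))$. This yields $\chi_c(F_\epsilon)=\chi_c(S_f^\epsilon)-\chi_c(Lk(f))$, hence $\chi_c(\bar F_\epsilon)=\chi_c(S_f^\epsilon)$. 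Without identifying that boundary contribution, your bookkeeping cannot land on the closed fibre.

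Two further accounting points you should make precise. First, the connected sub-leaves $\cal F_J$ are bundles over $E^0_J\cap\cal E_J$ with fibre $\R^{|J|-1}$, not $(\R^*)^{|J|-1}$; the $(\R^*)$-powers only enter through the \emph{overlap} count, not the local topology of a single leaf. Second, the multiplicity $n_J$ of connected components above a component of $E^0_J$ factors as $M_J\cdot 2^{|J|-1}$ with $M_J\in\{0,1,2\}$ equal to the degree of the covering $\widetilde E^{0,\epsilon}_J\to E^0_J$; this is exactly what turns the product $s_J\cdot n_J\cdot t_J\cdot\chi_c(E^0_J)$ into $2^{|J|-1}\chi_c(\widetilde E^{0,\epsilon}_J)$, i.e.\ into the coefficient $(-2)^{|J|-1}\cdot(-1)^{|J|}$ of the Denef--Loeser formula. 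You gesture at this combinatorics but do not carry it out, and without it the term-by-term matching you announce is not established.
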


\begin{proof} Assume first that $\epsilon\in \{-1, 1\}$. We denote
by $\cal F$ the fibre 
$\sigma^{-1}(F_\epsilon)$ and recall that $\cal F$ and $F_\epsilon$ have the same 
topological type.  
Let us denote $\bar{\cal K}$ the set of multi-indices 
$J\subset  \cal I$ such that 
$\bar E_J \cap \sigma^{-1}(0)\not=\emptyset$, with
$\bar E_J$ the closure of $E_J=\bigcap_{i\in J} E_i$. In what follows
only  $J\in\bar{\cal K}$ are concerned, 
since we study the 
local Milnor fibre at $0$. 
The proof consists in the computation 
of the Euler-Poincar\'e characteristic of $\cal F$ using the decomposition of $\cal F$ by the overlapping components $\cal F_I$
introduced  just before figure 1 and illustrated on figure 1. We simply count the number of these overlapping components in the decomposition of $\cal F$ they provide.
 Note that a connected component of $E^0_J$ (still denoted $E_J^0$ for simplicity in the sequel), for $J\subset \cal J$, is covered by 
$n_J:=M_J\cdot 2^{\vert J\vert -1}$ connected components $\cal G$ 
of $\cal F$, where $M_J$ is 
$0$, $1$ or $2$ depending on the fact that the multiplicity $m_J=gcd_{j\in J}(N_j)$ defining $\widetilde E^{0,\epsilon}_J$ is 
odd or even, and on sign condition on $c$ (remember from figure 1 how $E^0_J$ is covered by $\cal F_J$. Here the term covered simply means that one can naturally project the component $\cal F_J$ onto $E^0_J$). 
Note furthermore that $M_J$ is the degree of the covering $\widetilde E_J^{0,\epsilon}$ of $E_J^0$. 
Now expressing a connected component $\cal G$ of $\cal F$ as the union 
$\displaystyle\bigcup_{\vert I \vert=1, \cal F_I\subset \cal G} \cal F_I$, where the 
(connected) leaves $\cal F_I$ 
cover (the open subset $E^0_I\cap \cal E_I^0$ of $E^0_I$ homeomorphic to) 
$E^0_I$,
 and using the additivity of $\chi_c$, one has
that $\chi_c(\cal G)$ is expressed as a sum of characteristics of the overlapping 
connected sub-leaves
$\cal F_J$ of the $\cal F_I$'s, each of them with sign coefficient  
$s_J:=(-1)^{\vert J \vert -1 }$ . 
Note that (a connected component of) 
$E_J^0$ is covered by $n_J$ copies of such 
a $\cal F_J$, coming from the $n_J$ connected components of $\cal F$ above $E_J^0 \cap \cal E_J^0$,
 and that a connected sub-leaf $\cal F_J$ has the topology of  
$(E_J^0\cap \cal E_J^0)\times \R^{\vert J\vert -1}$. We denote by $t_J$ the characteristic 
$t_J:=\chi_c(\R^{\vert J\vert -1})=(-1)^{\vert J\vert -1}$. 
 
With this notation, summing over all the connected components $\cal G$ of $\cal F$,  one 
gets 
$$\chi_c(\cal F)=\sum_{J\in \bar{\cal K}} 
s_J \times  n_J \times \chi_c(E^0_J)\times t_J$$
$$=\sum_{J\in \bar{\cal K}} 
(-1)^{\vert J \vert -1} \times  2^{\vert J\vert -1}M_J \times \chi_c(E^0_J)
\times  (-1)^{\vert J \vert -1}$$
$$=
\sum_{J\in \bar{\cal K}} 2^{\vert J\vert-1}
\chi_c(\widetilde E_J^{0,\epsilon})  $$
$$=\sum_{J\cap\cal K\not=\emptyset} 2^{\vert J\vert-1} \chi_c(\widetilde E_J^{0,\epsilon}) 
+ \sum_{J\cap\cal K=\emptyset, J\in \bar{\cal K}} 2^{\vert J\vert-1} \chi_c(\widetilde E_J^{0,\epsilon}) $$
$$ = \chi_c(S_f^\epsilon)
+\sum_{J\cap\cal K=\emptyset,J\in \bar{\cal K}} 2^{\vert J\vert-1} 
\chi_c(\widetilde E_J^{0,\epsilon})$$
$$ = \chi_c(S_f^\epsilon)
+\chi_c(\bigcup_{J\cap\cal K=\emptyset,J\in \bar{\cal K}} \cal F_J). $$
Note that the sub-leaves $\cal F_J$ for $J\cap \cal K=\emptyset$ and $J\in \bar{\cal K}$ cover
the set $\{f\circ \sigma = c\}\cap \hat S(0,\alpha)$, for $\epsilon\cdot c >0$, 
where $\hat S(0,\alpha)$
is a neighbourhood $\sigma^{-1}(S(0,\alpha)\times ]0,\beta[)$ of 
$\sigma^{-1}(S(0,\alpha))$, with $0<\beta \ll \alpha$.
It follows that 
$$\chi_c(\bigcup_{J\cap\cal K=\emptyset,J\in \bar{\cal K}} \cal F_J)= 
\chi_c(F_\epsilon\cap (S(0,\alpha)\times ]0,\beta[))=\chi_c(Lk(f)\times ]0,\beta[)=
-\chi_c(Lk(f)). $$
We finally obtain
$$ \chi_c(F_\epsilon)=\chi_c(S_f^\epsilon)-\chi_c(Lk(f)),$$
and 
$$ \chi_c(\bar F_\epsilon)= \chi_c(F_\epsilon)+\chi_c(Lk(f))=\chi_c(S_f^\epsilon).$$
This proves the first equality of our statement, the   equality 
$\chi_c(\bar F_\epsilon)=(-1)^{d+1}\chi_c (F_\epsilon)$ being proved
in Remark \ref{bord}.

Assume now that $\epsilon \in \{<,>\}$, and denote  $\delta_<:=-1$ and $\delta_>:=1$, like
in Remark \ref{bord}. 
With this notation 
$\bar F_\epsilon=\bar F_{\delta_\epsilon}\times ]0,1[$, 
and by the formula proved above in the case  $\epsilon\in \{-1,1\}$, we obtain  
$$ \chi_c(\bar F_\epsilon)=\chi_c(\bar F_{\delta_\epsilon})\chi_c(]0,1[)
=-\chi_c(\bar F_{\delta_\epsilon})=-\chi_c(S_f^{\delta_\epsilon})=- 
\sum_{J\cap \cal K \not=\emptyset  } 2^{\vert J\vert-1}
\chi(\widetilde E_J^{0,\delta_\epsilon}).$$
But since $\widetilde E_J^{0,\epsilon}=
\widetilde E_J^{0,\delta_\epsilon}\times \R_+$, it follows that 
$$\chi_c( \bar F_\epsilon)= \sum_{J\cap \cal K \not=\emptyset } 2^{\vert J\vert-1} 
\chi(\widetilde E_J^{0,\delta_\epsilon})\chi_c(\R_+)=
\sum_{J\cap \cal K \not=\emptyset} 2^{\vert J\vert-1} \chi(\widetilde E_J^{0,\epsilon})
=\chi_c(S_f^\epsilon).$$
This proves the first equality of our statement.  The   equality 
$\chi_c(\bar F_\epsilon)=(-1)^{d+1}\chi_c (F_\epsilon)$
is the consequence of $\chi_c(\bar F_\epsilon)=\chi_c(\bar F_{\delta_\epsilon})\chi_c(]0,1[)$,
$\chi_c(  F_\epsilon)=\chi_c(  F_{\delta_\epsilon})\chi_c(]0,1[)$  
 and 
$\chi_c(\bar F_{\delta_\epsilon})=(-1)^{d+1}\chi_c (F_{\delta_\epsilon})$.
 
 To finish,  equality $\chi_c(S_f^\epsilon)=-\chi_c(G_\epsilon)$ comes from the
 equality $ \chi_c(G_\epsilon )=\chi_c(\bar F_{\delta_\epsilon})$ 
recalled in Remark \ref{bord} and from 
$\chi_c(\bar F_\epsilon)=-\chi_c(\bar F_{\delta_\epsilon})$, 
$\chi_c(S_f^\epsilon)=\chi_c(\bar F_\epsilon)$.

\end{proof}

\begin{remark}
As stated in Theorem \ref{Milnor}, the realization via $\chi_c$
of the motivic Milnor fibre $S_f^\epsilon$, for $\epsilon\in \{-1,1,<,>\}$, gives the 
Euler-Poincar\'e characteristic of the corresponding set theoretic semialgebraic  
closed  Milnor fibre $\bar F_\epsilon$. 
Nevertheless it is worth noting that this equality is in general not true at the higher 
level of
 $\chi(K_0[BSA_\R])$. Even computed in $K_0(\Var_\R)\otimes \Z[\frac{1}{2}]$, we may have
 $S_f^\epsilon\not=[A_{f,\epsilon}]$, for a given semialgebraic formula 
$A_{f,\epsilon}$ with real points $\bar F_\epsilon$.
 Let us illustrate this remark by the following quite trivial example.
\end{remark}

\begin{example}
Let us consider the simple case where $f:\R^2\to \R$ is given by $f(x,y)=xy$. 
After one blowing-up  $\sigma: M\to \R^2$ of the origin of $\R^2$, the situation is as required by Theorem \ref{Zeta function}.
We denote by $E_1$ the exceptional divisor $\sigma^{-1}(0)$ (which is isomorphic to $\P_1$) and by 
$E_2,E_3$ the irreducible components of the strict transform $\sigma^{-1}(\{f=0\})$. 
The induced
stratification of $E_1$ is given by $E_{1,2}^0=E_1\cap E_2$, $E_{1,3}^0=E_1\cap E_3$, and
the two connected components ${E}_1^{'0}, {E}_1^{''0}$ of $E_1\setminus (E_2\cup E_3)$.
We consider a chart $(X,Y)$ of $M$ such that $\sigma(X,Y)=(x=Y,y=XY)$. In this chart 
$(f\circ \sigma)(X,Y)=XY^2$. The multiplicity of $f\circ \sigma$ along $E_1$ is $N_1=2$, 
and the multiplicity of $\jac\sigma$ along $E_1$ is $1$, thus $\nu_1=2$. Assuming that 
$ E_1^{'0}$ corresponds to $X>0$ and $ E_1^{''0}$ corresponds to $X<0$,
it follows that 
$$\widetilde E^{'0,\epsilon}_1=\{(X,t); X\in E_1^{'0}, t\in \R, Xt^2?_\epsilon\}
\hbox{ and }  \widetilde E^{''0, \epsilon}_1=\{(X,t); X\in E_1^{''0}, t\in \R, Xt^2?_\epsilon\},$$ 
where $?_\epsilon$ is
$=1$, $=-1$, $>$ or $<0$ in case $\epsilon$ is $1$, $-1$, $>$ or $<$.
In case $\epsilon=1$ we obtain  
$$[\widetilde E^{'0,1}_1]=\L-1 \hbox{ and  } [\widetilde E^{''0,1}_1]=0$$ 
since $\widetilde E^{'0,1}_1$ has a one-to-one projection onto 
 $\{(X,Y); X=0, Y\not=0\}$) and $\widetilde E^{''0,1}_1$ is
empty.
Now in a neighbourhood of $E^0_{1,2}$, $f\circ \sigma (X,Y)=XY^2$, 
giving $N_1=1$, $N_2=2$ and $m=gcd(N_1,N_2)=1$. We also have $\nu_1=2$ and $\nu_2=1$. 
It follows that 
$$\widetilde E^{0,1}_{1,2}=\{(0,t); t\in \R, t=1\} \hbox{ thus }  [\widetilde E^{0,1}_{1,2}]=1.$$
 In the same way, using another chart, one finds 
 $$ \ [\widetilde E^{0,1}_{1,3}]=1.$$
By Theorem \ref{Zeta function} we then have 
$$ Z_f^1(T)=(\L-1)^{1-1}(\L-1)\Big(\frac{\L^{-2}T^2}{1-\L^{-2}T^2}\Big)
+
 2(\L-1)^{2-1}\Big(\frac{\L^{-2}T^2}{1-\L^{-2}T^2}\Big)
\Big(\frac{\L^{-1}T}{1-\L^{-1}T}\Big),$$
$$Z^1_f(T)=\frac{\L-1}{(\L T^{-1}-1)^2} \hbox{ and } S_f^1=-(\L-1).$$ 
Of course we find that $\chi_c(S_f)=\chi_c(\{ f=c \}\cap \bar B(0,1))=2$, 
$0<c\ll 1$. 

Now let us for instance choose $\{ xy=c,1-x^2-y^2>0\}$, for $0<c\ll 1$, as   a basic semialgebraic formula 
to represent the open Milnor fibre of $f=0$ and let us compute
$\beta([xy=c,1-x^2-y^2>0])$ (rather than $[xy=c,1-x^2-y^2>0]$ itself, since we use regular homeomorphims in our computations).
By definition of the realization $\beta:K_0(BSA_\R)\to  \Z[\frac{1}{2}][u]$, 
we have 
$$\beta([xy=c,1-x^2-y^2>0])=\frac{1}{4}\beta([xy=c,z^2=1-x^2-y^2])$$
$$
-\frac{1}{4}\beta([xy=c,z^2=x^2+y^2-1])+
 \frac{1}{2}\beta([xy=c,1-x^2-y^2\not=0]).$$
Projecting the algebraic set $ \{ xy=c,z^2=1-x^2-y^2\}$ orthogonally to the plane $x=-y$ with 
coordinates $(X=1/\sqrt 2(x-y),z)$ one finds
twice the quadric $z^2+2X^2=1-2c$ that is, up to regular homeomorphism, two circles. A circle having 
 class $u+1$, we have
$$\beta([xy=c,z^2=1-x^2-y^2])=2(u+1). $$ 
Projecting the algebraic set $ \{ xy=c,z^2=x^2+y^2-1\}$ to the plane $x=-y$ with 
coordinates $(X=1/\sqrt 2(x-y),z)$ one finds
twice the hyperbola $2X^2-z^2=1-2c$. Projecting orthogonally again the hyperbola onto one of its asymptotic 
axes we see that this hyperbola has class $u-1$. It gives 
$$\beta([xy=c,z^2=x^2+y^2-1])=2(u-1).$$
Finally the constructible set $\{xy=c,1-x^2-y^2\not=0\}$ being the hyperbola without 
$4$ points, we have
$$\beta([xy=c,1-x^2-y^2>0])=\frac{1}{2}(u+1)-\frac{1}{2}(u-1)+\frac{1}{2}(u-1)-2=
\frac{u-3}{2}. $$
Of course $\chi_c(\chi([xy=c,1-x^2-y^2>0]))=\chi_c(\{f=c\}\cap B(0,1)) =-2$.

The simple semialgebraic formula representing the set theoretic closed Milnor fibre is 
$\{xy=c,1-x^2-y^2\ge0\}$, it has class $\displaystyle \beta([xy=c,1-x^2-y^2>0])+4\beta([\{*\}])=\frac{u+5}{2}$
in $ \Z[\frac{1}{2}][u]$. But although 
$$\chi_c(\chi([xy=c,1-x^2-y^2\ge0]))=\chi_c(S_f^1)=\chi_c(\{f=c\}\cap \bar B(0,1))=2$$ 
as expected from Theorem \ref{Milnor}, 
we observe that 
$$\frac{u+5}{2}=\beta([xy=c,1-x^2-y^2\ge0]) \not=\beta(S_f^1)=-(u-1).$$
As a final consequence, we certainly cannot have 
this equality between $\chi([xy=c,1-x^2-y^2\ge0])$ and $S_f^1 $ at the level of $K_0(\Var_\R)\otimes \Z[\frac{1}{2}]$.

\end{example}

\vskip2cm

\bibliographystyle{amsplain}

\end{document}